\documentclass[english]{article}
\usepackage[a4paper, total={6in, 9in}]{geometry}

\usepackage{authblk}
\usepackage[T1]{fontenc}
\usepackage[latin9]{inputenc}
\usepackage[english]{babel}
\usepackage{graphicx}
\usepackage{float}
\usepackage{bbm}
\usepackage{graphicx}%
\usepackage{amsmath,amssymb,amsfonts,amsthm}%
\usepackage{mathrsfs}%
\usepackage[title]{appendix}%
\usepackage{xcolor}%
\usepackage{rotating}

\newtheorem{theorem}{Theorem}
\newtheorem{corollary}{Corollary}

\newtheorem{conjecture}{Conjecture}
\newtheorem{rem}{Remark}
\newtheorem{claim}{Claim}
\newcommand{\E}{\mathrm{E}}
\newcommand{\Var}{\mathrm{Var}}

\usepackage{natbib}
\setcitestyle{authoryear,open={(},close={)}}

\newcommand{\Conv}{%
  \mathop{\scalebox{1.5}{\raisebox{-0.2ex}{$\circledast$}}
  }
}

\begin{document}
\title{Effect of sampling on the descriptive distributions of correlated random walks}
\author[1]{Joseph D. Bailey}
\author[1]{Jessica Claridge}
\affil[1]{School of Mathematics, Statistics and Actuarial Science, University of Essex, UK}

\maketitle

\begin{abstract}
Random walks are commonly used to model movement throughout the sciences, from the motion of particles and molecules to the observed behaviour of animals and crowds.  The correlated random walk, which assumes a level of persistence between movement directions, has become ubiquitous in the analysis and modelling of movement in recent times.  Whilst many properties of the correlated random walk are known, there are still many which are not fully understood and, therefore, under utilised in movement data analysis.  Here we consider the effect that sub-sampling has on the descriptive distributions of correlated random walks.  Our work demonstrates the connection between the distributions of turning angles and step-lengths that characterise a correlated random walk, along with the resulting distributions found after sub-sampling.  We provide examples for where this approach could aid in movement analysis as well as determining future ways the work could be extended.
\end{abstract}

\section{Introduction}
Random walks (RW) appear throughout applied mathematics and the sciences; from the motion of particles in physical sciences \citep{patlak1953random,alt1980biased} to the modelling of animal movement in life sciences \citep{berg1993random,codling2008random,bailey2018navigational}, and from predicting financial markets in economics \citep{fama1995random} to the programming of robotics in computer science \citep{dimidov2016random}.

Early work concerning RW models assumed turns were uniformly distributed on the unit circle.  These simple random walks (SRW) give movement akin to simple Brownian motion and, though simplistic, they have been successfully incorporated into many natural and physical processes including in the modelling of gas particles \citep{weiss1983random}, solid state physics \citep{weiss1983brandom}, epidemiology \citep{fofana2017mechanistic,white2018dynamic} and ecological phenomena such as dispersal \citep{skellam1951random,turchin1998quantitative,okubo2001diffusion}, foraging behaviour \citep{bell1990} and predator-prey behaviour \citep{kareiva1990population}.  Extensions to the SRW model aim to more closely mimic real-world drivers of movement, such as including attraction/repulsion towards a global direction or location, due to some external or internal  force, for example a gradient function from a field force or a reaction to olfactory sense when migrating or navigating towards a food source.  This bias in the direction of movement leads to the biased random walk (BRW) model, which has been used in the analysis of many processes including chemotaxis \citep{alt1980biased,codling2008random}, diffusion in anomalous media \citep{bouchaud1990anomalous}, migration \citep{saul2012individual,papastamatiou2013telemetry}, foraging \citep{farnsworth1999grazers,barton2009evolution}, home-range behaviour \citep{borger2008there,breed2017predicting} and the movement of microorganisms \citep{hill1997biased}.  

An alternative approach is to consider how the direction of movement at each step changes relative to the direction of the previous step; a correlation in these headings results in the correlated random walk (CRW), which has become a popular null model in animal movement \citep{fagan2014correlated}.    
In general, movement is therefore described as a highly correlated, giving near straight line movement, or lowly correlated, in which movement appears more random and similar to a SRW.  

CRWs have a wide range of applications within ecology including the modelling of group behaviour and movement \citep{haydon2008socially,codling2014copycat}, modelling invasive spread of plants \citep{andersen2005risk} and are regularly used in the analysis of individual movement \citep{mcclintock2018momentuhmm,grant2018predicting,bailey2021walking}.  Outside of mathematical ecology, CRW have continued to be utilised and developed in fields including physics \citep{cressoni2013exact,sadjadi2015persistent,yuste2024gaseous}, soft matter physics \citep{liu2021correlated}, materials science \citep{bonilla2012multicomponent} and robotics, \citep{dimidov2016random,pang2021effect}. 

One natural problem within the RW framework is the question of sampling, which entails selecting a subset of the points in the random walk (i.e. every 10th point) and analysing the subsequent path found by connecting these.  An open problem concerns how the sampling rate effects the properties of the random walk.  In terms of data analysis, sampling from the observed data is used for a variety of reasons, such as reducing noise \citep{bailey2021micropersonality}, reducing volume of data \citep{gupte2022guide}, and allowing for meaningful/appropriate scales of movement e.g. recording the movement of large ungulates at multiple fixes per second will not give reliable results compared to taking values every few minutes \citep{nathan2022big,schlagel2016robustness,potts2018finding}.  However, the effects on the underlying model that this rescaling has is not fully understood, despite the need to ensure the appropriateness of any analyses on such datasets, regardless of scale, being well known \citep{schlagel2016robustness}.  As RW models assume that reorientation events occur precisely at the frequency of the recorded data \citep{codling2005sampling}, much previous work has concerned the identification of meaningful turning points (reorientation events) across the movement trajectory and the effect that sampling has on the potential to miss or mis-locate such points \citep{codling2005sampling,nathan2022big}.

Concentrating on pure CRW processes, \citet{codling2005sampling} considered the effect of summary statistics when a CRW is sampled, demonstrating via large-scale simulation that angular deviation observes a linear relation with the square root of the sampling rate, as well as generating expressions for the mean and variance of the distribution of speeds  (analogous to step-lengths); although, they note that this relationship breaks down as the RW becomes closer to a SRW.  \citet{rosser2013effect} extended on this work by further simulation to demonstrate the complex relationship between the sampling rate and the turning angle distributions.  They also determine an expression for the joint distribution between step lengths and turning angles when the number of true reorientation events within the sub-sampled RW is unknown, achieving an analytical expression that holds for large sampling rates and for certain constraints on the initial turning angle distribution.  \citet{nams2013sampling}, demonstrated through extensive simulations that by sub-sampling a CRW, autocorrelation between subsequent turning angles can artificially be introduced, which therefore causes the sampled RW to no longer be a strict CRW; although in practice movement data with some level of autocorrelation is still often treated and analysed as a CRW.  In a related but subtly different approach, \citet{schlagel2016robustness} focussed on identifying movement models that were `robust' under temporal sampling, allowing specific models to be fitted to data at varying temporal resolutions, where robust was defined by specific mathematical properties of the movement model's characteristic functions (see section 3.1 in \citet{schlagel2016robustness}).  However, their work was limited to models with uniform turning angle distribution only.

Here we extend on this previous work by determining the form of the probability density function (PDF) for the turning angles of the sub-sampled CRW.  Our method is not limited to any specific turning angle distributions (e.g. wrapped Cauchy, wrapped normal, von Mises, SWS distributions \citep{jammalamadaka2001topics}), rather it is valid for all symmetric circular distributions.  We provide a precise analytical expression for the turning angle and step-length distributions when a CRW is sampled at every other point.  We continue to provide a method that provides the distribution of the turning angels for any sampling rate of the form $2^k$, where $k$ is a positive integer, up to very close approximation. The approach used here gives a geometric interpretation of a cause for the autocorrelation observed by \citet{nams2013sampling}. Finally, we provide examples for where the approach of comparing the known and observed sub-sampled distributions of turning angles can be used in the analysis of movement data.

\section{Background}
Consider a 2-dimension discrete time, continuous space random walk (RW) in the plane, given by $\left(X\right)_{n}=\{X_{1},X_{2},\ldots,X_{n}\}$ where $X_{t}=(x_{t},y_{t})$ denotes the location of the RW at time step $t$.  Successive locations are given by $X_{t+1}=X_t+(\Delta x_t, \Delta y_t)$, where $\Delta x_t, \Delta y_t$ are the changes in the $x$ and $y$ coordinate respectively at time $t+1$, therefore giving $\Delta x_t= x_{t+1}-x_t$ and $\Delta y_t= y_{t+1}-y_t$.

Any such RW can be considered as a step-turn sequence, where the location of successive steps is given by a step-length, $\ell$, equivalent to the distance between successive points, and a turning angle, $\theta$, which denotes the angle between the previous direction of movement and the new direction.  

Where
\begin{equation}
\ell_{t}=\Vert\overrightarrow{X_{t}X_{t+1}}\Vert=\sqrt{(x_{t+1}-x_{t})^{2}+\left(y_{t+1}-y_{t}\right)^{2}}=\sqrt{\Delta x_t^2+\Delta y_t^2}
\end{equation}
and 
\begin{equation}
 \theta_{X_t}= \angle\left(\overrightarrow{X_{t-1}X_{t}},\overrightarrow{X_{t}X_{t+1}}\right) \Rightarrow \cos\theta_{X_t}=\frac{\overrightarrow{X_{t-1}X_{t}}\cdot\overrightarrow{X_{t}X_{t+1}}}{\Vert\overrightarrow{X_{t-1}X_{t}}\Vert\Vert\overrightarrow{X_{t}X_{t+1}}\Vert}
\end{equation}
for $1\leq t \leq n-1$, with $(x_0,y_0), (x_1,y_1)$ given a priori.  Therefore, in this way a RW can be described by the sequence $(\ell_t,\theta_{X_t})$ with step-lengths modelled by some distribution on the positive reals, $\ell_t\sim \Lambda$, and turning angles modelled by a distribution on the unit circle $\theta_t\sim f_\circ$, i.e. $f_\circ$ has domain $(-\pi,\pi]$. (See Appendix A for more information on these distributions.)

The simplest form of a discrete time RW is the simple random walk (SRW), which is closely related to Brownian motion.  In a SRW the changes in the $x$ and $y$ coordinates at each time point are independently and identically drawn (i.i.d) from the normal distribution, $\Delta x, \Delta y \sim N(0,\sigma^2)$.  This leads to the equivalent step-turn process having turning angles i.i.d from the uniform distribution, $\theta_{t}\sim \text{Unif}(-\pi,\pi)$, and step-lengths also being i.i.d, following the Rayleigh distribution, $\ell_{t}\sim R(l;\sigma^2)$, which has mean, $\E[\ell]=\frac{\sigma\sqrt{2\pi}}{2}$, and variance, $\Var[\ell]=2\sigma^2\left(1-\frac{\pi}{4}\right)$ \citep{petrovskii2014multiscale,Ahmed2021}.

In a correlated random walk (CRW), in which direction of movement is expected to persist between proceeding steps, the bearings between successive steps (angle between steps taken from due North, x-axis etc.) are correlated, with successive steps locally biased towards the same direction, however the turning-angles are not correlated.   A standard approach assumes the turning angles are described by a symmetric, uni-modal distribution with mean value 0, such as the wrapped Cauchy, wrapped normal or von Mises distributions (see Appendix A for more information on circular distributions).  The relative `straightness' of a CRW is governed by how peaked the turning angle distribution is around the value 0, which is described by the length of the mean resultant vector, $\rho$, calculated as the root of the sum of the squares of the distribution's first trigonometric moments (Appendix A).  Values close to $1$ give near straight-line movement, with values close to 0 giving more random motion (note $\rho=0$ is equivalent to the circular uniform distribution where all angles in $(-\pi,\pi]$ are equally likely) (Fig.~\ref{fig:CRW examp}).  It is also often assumed in a CRW that each step-length, $\ell_i$, and each turning angle, $\theta_{X_i}$, are i.i.d with step-lengths and turning-angles neither jointly- nor auto-correlated, which we shall assume  throughout.

\begin{figure}
    \centering
    \includegraphics[width=4in]{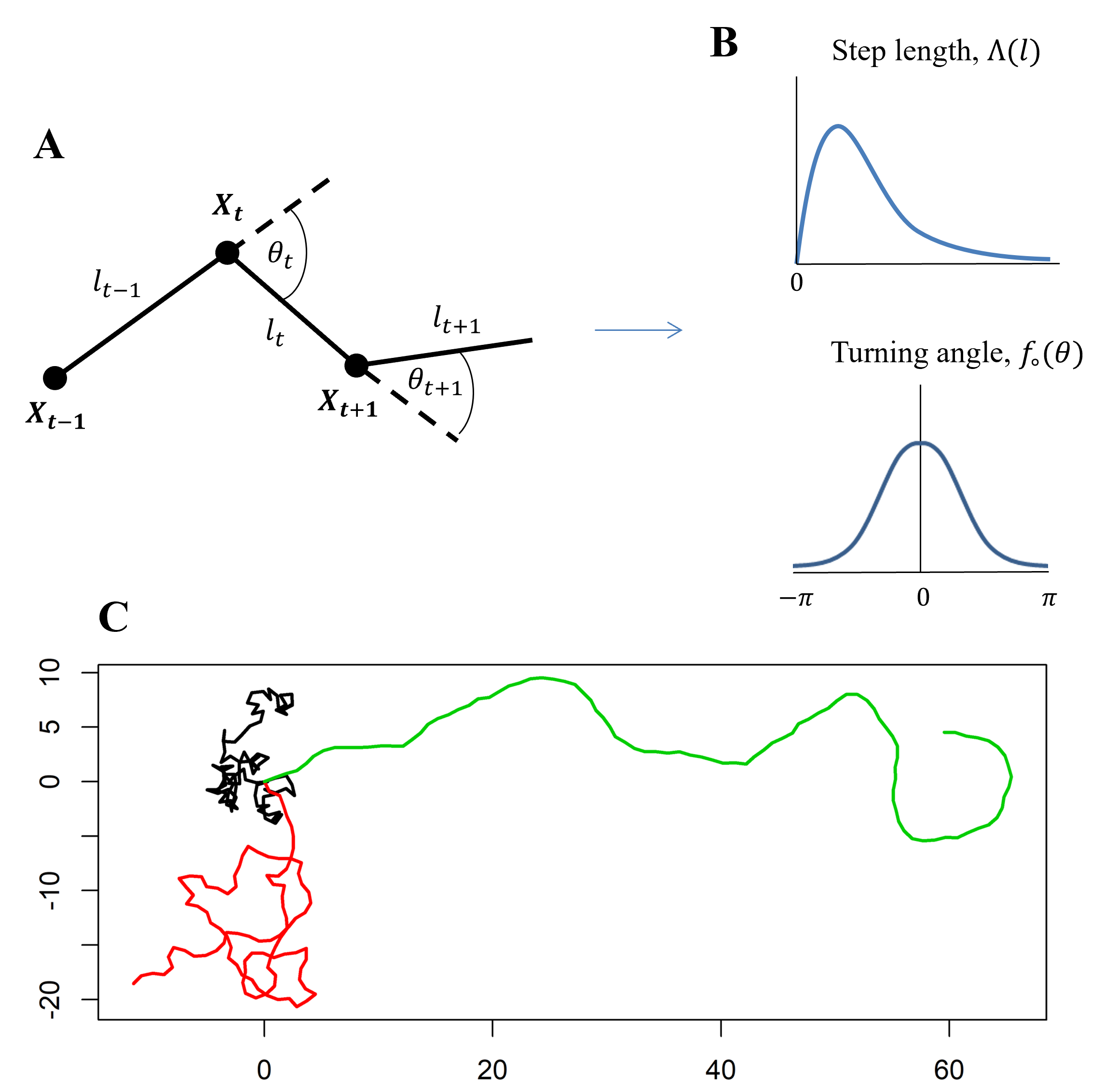}
    \caption{(A) shows the step-turn description of a discrete movement path featuring step-lengths, $\ell$, and turning angles, $\theta$, and locations $X$. (B) depicts the resulting step-length and turning angle distributions.  (C) shows three CRWs formed using a constant step-length distribution and turning angles drawn from a wrapped normal (WN) distribution, each trajectory was formed with a different value for the concentration parameter for the WN distribution, $\rho$, with: black - $\rho=0$, equivalent to random movement; red - $\rho=0.7$; green - $\rho=0.95$. Figure adapted from \citep{bailey2023assessment}.}
    \label{fig:CRW examp}
\end{figure}

Sub-sampling a RW is found by choosing a subset of the original locations in the initial RW, usually those which are a set number of time-steps apart (e.g. every $r$th point along the RW). Such a sub-sampled RW can also be described by using a step-turn description. Let $(X)_n$ be a CRW with turning angles, $\theta_{X_t}$, drawn from the circular distribution $f_\circ$ and step-lengths, $\ell_t$, drawn from a distribution on the positive reals $\Lambda$.  Consider the sequence $\left(X\right)^{\left\{ r\right\} }$ formed by sub-sampling the CRW given by $\left(X\right)_{n}$ at every $r$th point (Fig. 2), that is $\left(X\right)^{\left\{ r\right\} }=\left\{ X^{\{r\}}_{1},X^{\{r\}}_{2},\ldots,X^{\{r\}}_{t},\ldots\right\} =\left\{ X_{1},X_{1+r},\ldots,X_{1+tr},\ldots\right\} $. This has step-turn description given by $\left(\ell^{\{r\}}_i,\theta^{\{r\}}_{X_i}\right)$ where,
\begin{equation}
\ell^{\{r\}}_{i}=\Vert\overrightarrow{X^{\{r\}}_{i}X^{\{r\}}_{i+1}}\Vert=\Vert\overrightarrow{X_{i}X_{i+r}}\Vert=\sqrt{(x_{i+r}-x_{i})^{2}+\left(y_{i+r}-y_{i}\right)^{2}}
\end{equation}
and 
\begin{equation}
 \theta^{\{r\}}_{X_i}= \angle\left(\overrightarrow{X^{\{r\}}_{i-1}X^{\{r\}}_{i}},\overrightarrow{X_{i}^{\{r\}}X^{\{r\}}_{i+1}}\right) = \angle\left(\overrightarrow{X_{i-r}X_{i}},\overrightarrow{X_{i}X_{i+r}}\right) \Rightarrow \cos\theta_{X_i}=\frac{\overrightarrow{X_{i-r}X_{i}}\cdot\overrightarrow{X_{i}X_{i+r}}}{\Vert\overrightarrow{X_{i-r}X_{i}}\Vert\Vert\overrightarrow{X_{i}X_{i+r}}\Vert}
\end{equation}
with $\theta^{\{r\}}_{X_i}\sim f_\circ^{\{r\}}(\theta)$ and $\ell^{\{r\}}_{i}\sim\Lambda^{\{r\}}$.\\

\begin{figure}
    \centering
    \includegraphics[width=5.5in]{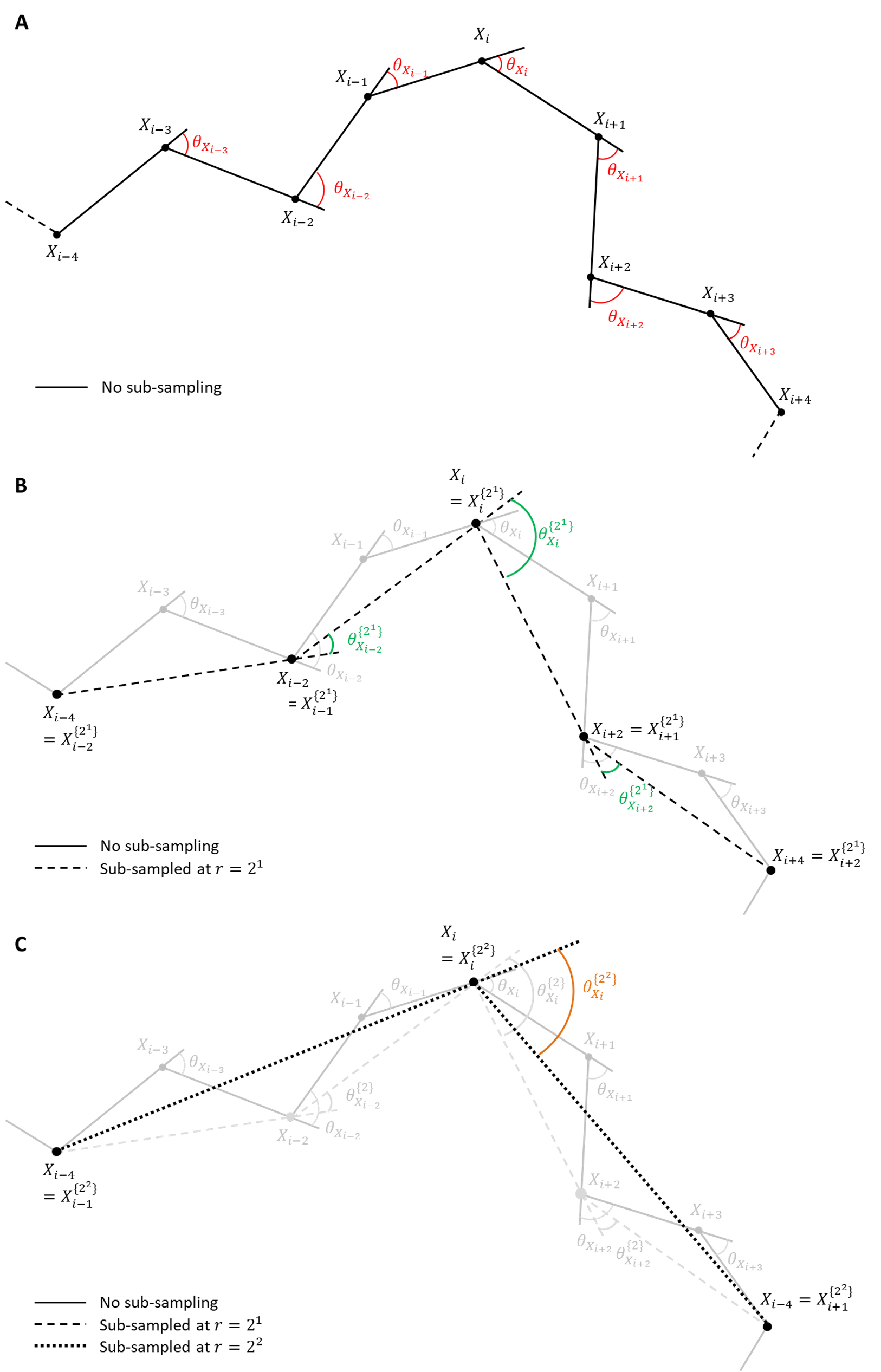}
    \caption{Examples of sub-sampling a RW, highlighting the resulting turning angles at each level of sampling. (A) is the initial, unsampled RW; (B) shows the initial RW sub-sampled at rate $r=2^1$; (C) shows the initial RW sub-sampled at rate $r=2^2$.}
\end{figure}

Whilst it is well known that as the sampling rate of a CRW increases, the resulting sampled RW appears closer to that of a SRW \citep{kareiva1983analyzing,codling2008random} and, hence, the turning angle distribution tends to the circular uniform distribution, it remains an open question to describe the connection between the distribution of the initial turning angles, $f_\circ(\theta)$, with the distribution of the turning angles sampled at every $r$th step, $f_\circ^{\{r\}}(\theta)$, and similar for the step lengths distributions $\Lambda(\ell)$ and $\Lambda^{\{r\}}(\ell)$.

\section{Main Claim}
Here we introduce the main claims of the work.  First, we consider a CRW with step-lengths of a fixed size, which will be sub-sampled at rate $r=2$, giving precise analytical solutions for the resulting turning angle and step-length distributions of the sub-sampled process.  We then move to the more general case, looking at a CRW sampled at rate $r=2^{k}$, where $k\in\mathbb{Z}_{\geq2}$.

\subsection{Case for sampling rate $r=2$, with initial CRW having step-lengths of fixed size.}

\begin{theorem}
Let $(X)_n=\{X_0, X_1, X_2,\ldots,X_n\}$ be a correlated random walk in $\mathbb{R}^2$ with step-turn characterisation $(\ell_i,\theta_{X_t})$, where $\ell_{t}=\ell\in\mathbb{R}_{>0}$, a fixed positive real value for all $t$, and $\theta_{X_t}\sim f_{\circ}$ where $f_{\circ}$ is a symmetric circular distribution centred around $0$.  The subsequent RW formed by sub-sampling $(X)_n$ at rate $r=2$, has circular turning-angle distribution $f^{\{2\}}_{\circ}$ given by
\begin{equation}\nonumber
    f^{\{2\}}_{\circ}(\theta)=\sum_{m=-1}^{1}f^{\{2\}}(\theta+2m\pi),
\end{equation}
where 
\begin{equation}\label{eq:theorem 1}
    f^{\{2\}}(\theta)=4f_{\circ}(\theta)\ast f^{*2}_{\circ}(2\theta),
\end{equation} and $f^{*2}$ is the power convolution of $f$ (i.e. $f^{*2}\stackrel{\text{def}}{=}f\ast f$), with $f_\circ(2\theta)=0$, for all $\theta\notin[-\frac{\pi}{2},\frac{\pi}{2}]$.
\end{theorem}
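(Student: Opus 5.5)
The plan is to reduce the sub-sampled turning angle to an explicit linear combination of three i.i.d. original turning angles, compute the density of that combination by convolution on $\mathbb{R}$, and then wrap the result back onto $(-\pi,\pi]$.

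\textbf{Geometric step.} Consider two consecutive original steps of common length $\ell$. Let the first have bearing $\alpha$ and let the turning angle between them be $a\in(-\pi,\pi]$. Their sum is
\[
\ell\bigl(e^{i\alpha}+e^{i(\alpha+a)}\bigr)=2\ell\cos(a/2)\,e^{i(\alpha+a/2)}.
\]
Since $|a/2|\le\pi/2$, we have $\cos(a/2)\ge0$, with equality only on the null event $a=\pi$. Hence the sub-sampled step has length $2\ell\cos(a/2)$ and bearing exactly $\alpha+a/2$. It matters that the representative of $a$ is taken in $(-\pi,\pi]$; this is what makes the bisector direction unambiguous.

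\textbf{Expressing $\theta^{\{2\}}$ in the original angles.} Take four consecutive original steps with turning angles $a$ (between steps $1,2$), $b$ (between $2,3$) and $c$ (between $3,4$). By the geometric step, the first sub-sampled step has bearing $\alpha+a/2$. The third original step has bearing $\alpha+a+b$, so the second sub-sampled step has bearing $\alpha+a+b+c/2$. The sub-sampled turning angle is therefore
\[
\theta^{\{2\}}\equiv b+\tfrac{a+c}{2}\pmod{2\pi},
\]
where $a,b,c$ are i.i.d.\ with density $f_\circ$ on $(-\pi,\pi]$, by the CRW assumptions. As a real number, $T:=b+(a+c)/2$ lies in $(-2\pi,2\pi]$.

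\textbf{Density computation.} Regard $f_\circ$ as a density on $\mathbb{R}$ that vanishes outside $(-\pi,\pi]$. Then $a+c$ has density $f_\circ^{*2}$, supported in $(-2\pi,2\pi]$. The change of variables $u=(a+c)/2$ gives $U=(a+c)/2$ the density $2f_\circ^{*2}(2u)$, supported in $(-\pi,\pi]$. By independence of $b$ and $U$, the density of $T$ is
\[
f_\circ\ast\bigl(2f_\circ^{*2}(2\,\cdot)\bigr).
\]
I would then match this to the form \eqref{eq:theorem 1}, checking the constant against the paper's stated convention that the rescaled function is set to zero for arguments outside $[-\pi/2,\pi/2]$. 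Symmetry of $f_\circ$ is used here only to confirm that the result is symmetric about $0$, and hence that the orientation convention for turning angles does not matter. Finally, since $T\in(-2\pi,2\pi]$, reducing modulo $2\pi$ into $(-\pi,\pi]$ requires only the shifts $m\in\{-1,0,1\}$. This yields $f^{\{2\}}_\circ(\theta)=\sum_{m=-1}^{1}f^{\{2\}}(\theta+2m\pi)$.

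\textbf{Main obstacle.} The step needing the most care is the angle bookkeeping in the first two steps. The identity $\theta^{\{2\}}=b+(a+c)/2$ holds modulo $2\pi$ only if each half-angle is taken from the principal representative of $a$ and of $c$. If a full $2\pi$ were added to $a$, the bisector would flip by $\pi$; this is exactly what the positivity of $\cos(a/2)$ rules out. The remaining care is in tracking the scaling constants under $u\mapsto 2u$ and reconciling them with the normalisation written in the statement.
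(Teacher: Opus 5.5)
Your argument is correct and has the same skeleton as the paper's proof. Both write $\theta^{\{2\}}_{X_t}$ as $\theta_{X_t}+\tfrac12\theta_{X_{t-1}}+\tfrac12\theta_{X_{t+1}}$, convolve, and wrap with $m\in\{-1,0,1\}$. Where you differ, and improve on the paper, is the handling of signs.

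The paper obtains the half-angles as base angles of the isosceles triangles $X_{t-2}X_{t-1}X_t$ and $X_tX_{t+1}X_{t+2}$. It ends up with $\pm\theta_{X_t}\pm\tfrac12\theta_{X_{t-1}}\pm\tfrac12\theta_{X_{t+1}}$, with signs ``depending on the geometry''. It then removes the signs by appealing to the symmetry of $f_\circ$. That settles the distribution of each term separately. It says nothing about whether the geometry-dependent signs are independent of the angles and of one another, which is what the convolution step needs.

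Your identity $e^{i\alpha}+e^{i(\alpha+a)}=2\cos(a/2)\,e^{i(\alpha+a/2)}$, with $a$ the principal representative, is the same geometric fact written with signed bearings. It fixes all three signs to $+$. Independence then comes directly from the CRW assumptions, and the symmetry hypothesis is not needed at all. The paper's sign framework does serve a purpose: it is what gets generalised to Theorem 2 through the parameter $p$. Your computation explains the paper's observation that $p=1$ is exact at $r=2$ with fixed step length.

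The constant you left open closes as follows. The paper's proof defines $f^{*2}_\circ(2\theta)$ as the self-convolution of $\theta\mapsto f_\circ(2\theta)$. The substitution $w=2v$ gives $\bigl[f_\circ(2\,\cdot)\ast f_\circ(2\,\cdot)\bigr](u)=\tfrac12(f_\circ\ast f_\circ)(2u)$. Hence $4f_\circ\ast\bigl[f_\circ(2\,\cdot)\ast f_\circ(2\,\cdot)\bigr]=f_\circ\ast\bigl(2f^{*2}_\circ(2\,\cdot)\bigr)$, which is exactly your density. If $f^{*2}_\circ(2\theta)$ is instead read as $(f_\circ\ast f_\circ)(2\theta)$, the factor $4$ is off by a factor of two, and the result would integrate to $2$. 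State the convention explicitly when you do the matching.
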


\begin{proof}
Let $(X)_{n}$ be a correlated random walk as in Theorem 1. For a sampling rate of $r=2$, every other point of $(X)_{n}$ will be chosen to form $(X)^{\{2\}}$
which will have step lengths $\ell^{\{2\}}_{t}$ drawn from some distribution
$\Lambda^{\{2\}}$ and turning angles $\theta^{\{2\}}_{X_t}$ drawn from some distribution $f^{\{2\}}_{\circ}(\theta)$.

\begin{figure}
    \centering
    \includegraphics[width=4.5in]{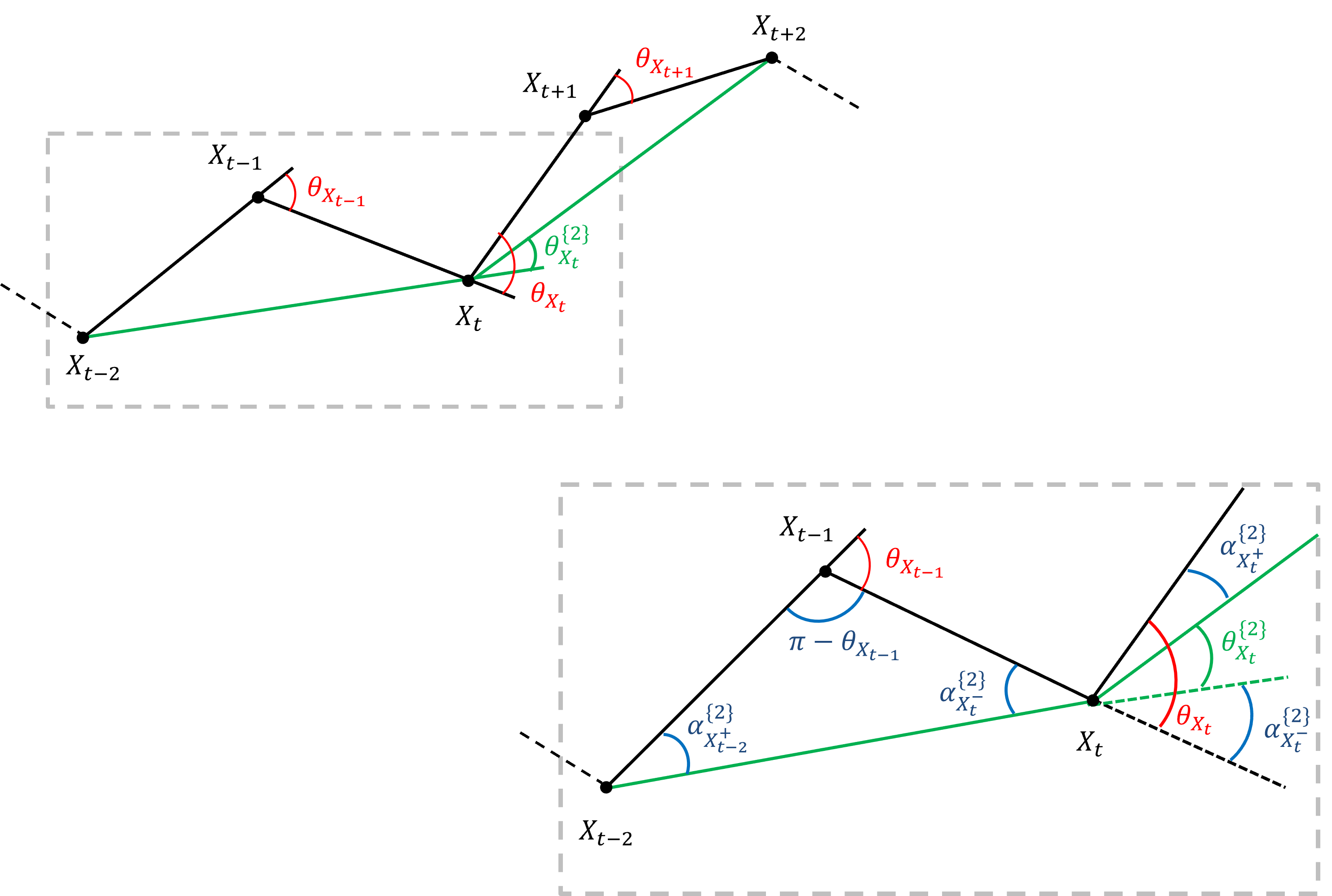}
    \caption{Figure depicting the angles required in the calculation of Theorem 1.  The green line depicts the sub-sampled RW formed at
    rate $r=2$.}
\end{figure}

Consider the point $X_t$, which w.l.o.g. we can assume is in both the initial and the sub-sampled walks.  The angle $\theta^{\{2\}}_{X_t}$ is therefore the angle between movement directions $\overrightarrow{X_{t-2}X_t}$ and $\overrightarrow{X_{t}X_{t+2}}$ (see Fig 2 \& 3).  Now let $T_t$ be the triangle formed by the point $X_t$ and the two previous locations (i.e. $T_t=\bigtriangleup X_tX_{t-1}X_{t-2}$).  This must be isosceles as the distances between successive points, are constant  ($||X_{t-2}, X_{t-1}||=||X_{t-1}, X_{t}||=\ell$).  Denote the internal angle of this triangle at $X_t$ as $\alpha^{\{2\}}_{X^-_t}$, that is $\alpha^{\{2\}}_{X^-_t}=\angle \left(X_{t-2}X_tX_{t-1}\right)$, where the $\{-\}$ in the subscript denotes that this is the internal angle formed from the \textit{prior} two locations at $X_t$.  Similarly, we denote the internal angle of the triangle $T_t$ at $X_{t-2}$ as $\alpha^{\{2\}}_{X^+_{t-2}}$ (Fig 2) where the $\{+\}$ denotes that this is the internal angle at $X_{t-2}$ and the \textit{succeeding} two points (Fig 3).  As $T_t$ is isosceles the internal angle at $X_{t-2}$ must equal the internal angle at $X_{t}$, hence $\alpha^{\{2\}}_{X^+_{t-2}}=\alpha^{\{2\}}_{X^-_{t}}$.  The remaining internal angle at $X_{t-1}$ is simply $\left(\pi-\theta_{X_{t-1}}\right)$, therefore we have
\begin{align}\nonumber
\alpha^{\{2\}}_{X^+_{t-2}}+\alpha^{\{2\}}_{X^-_t}+(\pi-\theta_{X_{t-1}})&=  \pi\\\nonumber
\Rightarrow \alpha^{\{2\}}_{X^-_t}+\alpha^{\{2\}}_{X^-_t}+(\pi-\theta_{X_{t-1}})&=  \pi,\\\nonumber
\text{hence,} \quad 2\alpha^{\{2\}}_{X^-_t}&=  \theta_{X_{t-1}}\\
\Rightarrow\alpha^{\{2\}}_{X^-_t}&=\frac{1}{2}\theta_{X_{t-1}}
\label{eqn: 6}
\end{align}

Repeating this approach for the triangle formed by $X_t$ and the two subsequent locations, $X_{t+1}, X_{t+2}$, gives 
\begin{equation}
    \alpha^{\{2\}}_{X^+_t}=\frac{1}{2}\theta_{X_{t+1}},
    \label{eqn: 6b}
\end{equation}
where $\alpha^{\{2\}}_{X^+_t}=\angle \left(X_{t+1}X_{t}X_{t+2}\right)$.
From Figure 3 and Eq \ref{eqn: 6}-\ref{eqn: 6b},  we can see that 
\begin{align}\nonumber
\theta^{\{2\}}_{X_t}&=\pm\theta_{X_t}\pm\alpha^{\{2\}}_{X^-_t}\pm\alpha^{\{2\}}_{X^+_t}\\
&=\pm\theta_{X_t}\pm\frac{1}{2}\theta_{X_{t-1}}\pm\frac{1}{2}\theta_{X_{t+1}}
\label{eqn: 7}
\end{align}

where each $\pm$ depends on the geometry of the RW and how the angles overlap in Fig 3.\\

As $\theta_{X_t},\theta_{X_{t-1}},\theta_{X_{t+1}}$ are i.i.d (see `Background'), drawn from the distribution $f_\circ$, Eq. \ref{eqn: 7} demonstrates that the distribution of $\theta^{\{2\}}_{X_t}$ can be found by convolution.\\

\begin{rem}
If $X$ is a random variable drawn from the continuous distribution $f_X$ with domain $[X_{\text{min}},X_{\text{max}}]$, then the distribution of $aX$, where $a\in\mathbb{R}$, is given by 
\begin{equation}
    aX\sim\frac{1}{|a|}f_X\left(\frac{1}{a}x\right),\quad \text{with domain }[aX_{\text{min}},aX_{\text{max}}].
    \label{eqn: aX dist}
\end{equation}
\label{rem:1}
\end{rem}

\noindent As $f_{\circ}$ is symmetrically distributed around 0, giving $f_{\circ}(\theta)=f_{\circ}(-\theta)$, Remark 1 indicates that the parity of each term in the RHS of Eq \ref{eqn: 7} will not affect the distribution of that term.  Therefore, denoting the distribution of $\theta^{\{2\}}_{X_i}$ as $f^{\{2\}}$, and using Remark \ref{rem:1} gives

\begin{align} \nonumber
f^{\{2\}}(\theta)=&f_{\circ}(\theta)\ast 2f_{\circ}(2\theta)\ast 2f_{\circ}(2\theta)\\
=&4f_{\circ}(\theta)\ast f^{*2}_{\circ}(2\theta) 
\end{align}
where $f^{*2}_{\circ}(2\theta)\stackrel{def}{=}f_{\circ}(2\theta)\ast f_{\circ}(2\theta)$.  Note that we define $f_\circ(2\theta)=0$ for all $\theta\notin[-\pi/2,\pi/2]$, and therefore $f^{\{2\}}(\theta)$ has domain $[-2\pi,2\pi]$.  Finally, we transform $f^{\{2\}}(\theta)$ into a circular distribution with domain $\theta\in[-\pi,\pi]$ by wrapping around the unit circle and denoting as $f^{\{2\}}_{\circ}$, to give
\begin{equation}
    f^{\{2\}}_{\circ}(\theta)=\sum_{m=-1}^{1}f^{\{2\}}(\theta+2m\pi).
\end{equation}

\end{proof}

\subsubsection{Simulation and Verification of Theorem 1}
To demonstrate Theorem 1, we can compare the PDFs predicted by the theorem with those found by repeated simulation of subsampled CRWs.  Fig \ref{fig: 4} shows these results for various circular distributions and parameter values, with predicted values from Theorem 1 as points and the simulated results as solid lines.  These demonstrate that the results predicted from Theorem 1 match the simulated results as expected.  

The simulated results were found by generating a CRW featuring $10^8$ steps, with each step-length being of unit length.  Turning angles were drawn from a zero-centred circular distribution with varying concentration parameters.  The subsequent CRWs were then sub-sampled at a rate of $r=2$ and the resulting turning angle distributions for the sub-sampled RW were found by calculating each new turning angle.  Results were then averaged over 100 repeated simulations for each distribution and parametrisation.  The predicted results from Theorem 1 were found via repeated use of Eq 8, where for each term in Eq 8, $10^8$ random values were drawn from the circular distribution, then summed modulo $2\pi$, with $f^{\{2\}}_{\circ}$ given by the distribution of these angles. In all cases, all calculations were carried out in `\textit{R}'.

\begin{figure}
    \centering
    \includegraphics[width=1\linewidth]{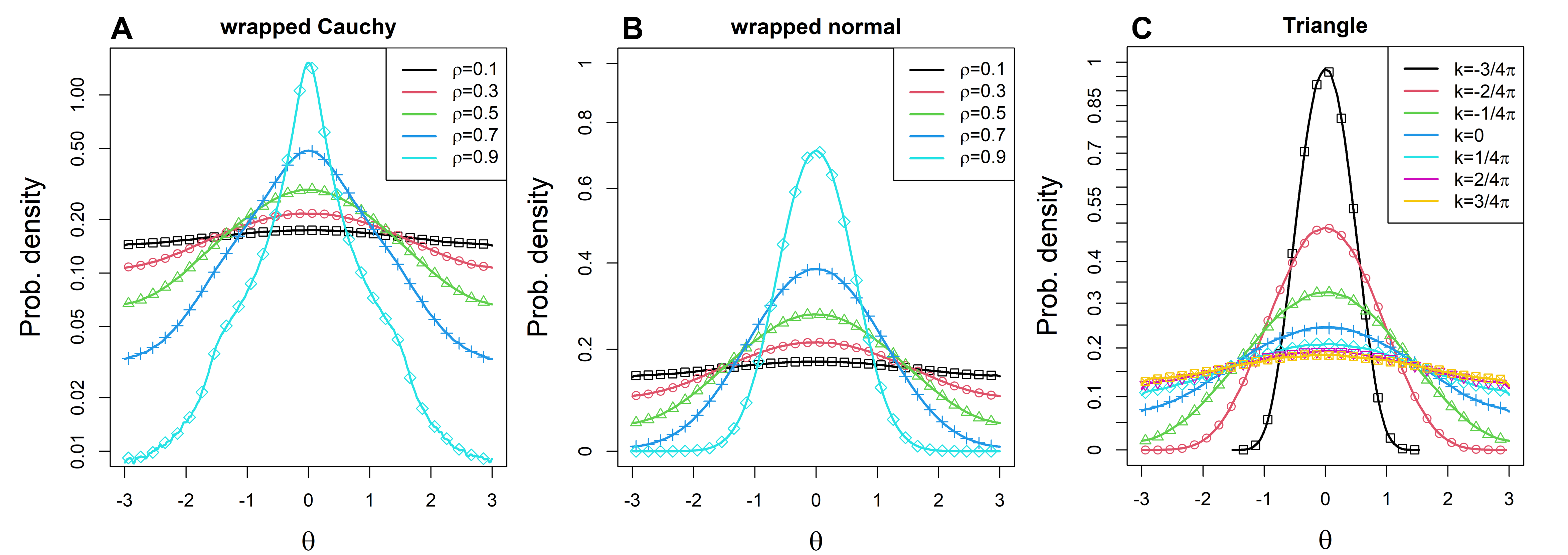}
    \caption{Results of turning angle distributions when a CRW is sampled at a rate $r=2$.  In all cases points are predicted values from Theorem 1 (specific values shown for $\theta$ are from $-\pi$ to $\pi$ at 0.2 intervals), solid lines are the simulated results.  (A) had initial distribution $f_{\circ}$ given by a wrapped Cauchy distribution; (B) had initial distribution $f_{\circ}$ given by a wrapped normal distribution; (C) had initial distribution $f_{\circ}$ given by a wrapped triangle distribution.  For each distribution the value of the parameter determining the peakedness of the distribution (See Appendix A) varied as in the provided key.  Note in all cases the $y$-axis has been log transformed to improve visualisation}
    \label{fig: 4}
\end{figure}

\subsubsection{Step-length distribution for subsampled RW from Theorem 1}
As a consequence of determining the turning angle distribution $f^{\{2\}}_{\circ}$, the step-length distribution $\Lambda^{\{2\}}$, can also be precisely calculated:

\begin{corollary}
    For a correlated random walk with constant step-length distribution, that is, for all $\ell$ we have $\Lambda(\ell)=\ell^*\in \mathbb{R}$, and turning angles, $\theta_t$, drawn from a circular distribution $f_\circ$ centred at 0, the distribution of step-lengths of the random walk formed by sub-sampling at rate $r=2$ is given by \begin{equation}
        \Lambda^{\{2\}}(\ell^{\{2\}})=\frac{2}{\sqrt{(\ell^*)^2-\left(\frac{\ell^{\{2\}}}{2}\right)^2}}f_\circ\left(2\arccos{\left(\frac{\ell^{\{2\}}}{2\ell^*}\right)}\right)
    \end{equation} which has range $[0,2\ell^*]$.  
\end{corollary}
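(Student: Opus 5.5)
The plan is to treat the sub-sampled step-length as a deterministic function of a single original turning angle, and then obtain its density by a one-dimensional change of variables. Fix a step of the sub-sampled walk, $\ell^{\{2\}}=\Vert\overrightarrow{X_{t-2}X_t}\Vert$. As in the proof of Theorem 1, the points $X_{t-2},X_{t-1},X_t$ form the isosceles triangle $T_t$. Its two equal sides have length $\ell^*$, and its apex angle at $X_{t-1}$ is $\pi-\theta_{X_{t-1}}$. Applying the law of cosines to $T_t$ gives
\begin{equation}\nonumber
(\ell^{\{2\}})^2=2(\ell^*)^2-2(\ell^*)^2\cos(\pi-\theta_{X_{t-1}})=2(\ell^*)^2\left(1+\cos\theta_{X_{t-1}}\right)=4(\ell^*)^2\cos^2\!\left(\tfrac{\theta_{X_{t-1}}}{2}\right).
\end{equation}
Since $\theta_{X_{t-1}}\in(-\pi,\pi]$, we have $\cos(\theta_{X_{t-1}}/2)\geq 0$. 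Hence
\begin{equation}\nonumber
\ell^{\{2\}}=g(\theta_{X_{t-1}}),\qquad g(\theta)=2\ell^*\cos\left(\tfrac{|\theta|}{2}\right).
\end{equation}
This immediately gives the stated range $[0,2\ell^*]$. Note that, unlike Theorem 1, only one turning angle enters, so no convolution is required, and the distribution of $\ell^{\{2\}}$ is simply the push-forward of $f_\circ$ under $g$.

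Next, the map $g$ is not injective on $(-\pi,\pi]$, but it is even and strictly decreasing in $|\theta|$ on $[0,\pi]$. I would therefore split the domain into the two branches $\theta\in[0,\pi]$ and $\theta\in(-\pi,0]$. Both branches share the same inverse in absolute value, namely
\begin{equation}\nonumber
|\theta|=h(\ell^{\{2\}})=2\arccos\left(\frac{\ell^{\{2\}}}{2\ell^*}\right).
\end{equation}
By the standard change-of-variables formula for a density, which is the general version of Remark 1, we get
\begin{equation}\nonumber
\Lambda^{\{2\}}(\ell^{\{2\}})=\Big(f_\circ(h(\ell^{\{2\}}))+f_\circ(-h(\ell^{\{2\}}))\Big)\left|h'(\ell^{\{2\}})\right|.
\end{equation}
The symmetry $f_\circ(\theta)=f_\circ(-\theta)$ then collapses the two branch contributions into $2f_\circ(h(\ell^{\{2\}}))\,|h'(\ell^{\{2\}})|$.

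The remaining step is to compute the Jacobian:
\begin{equation}\nonumber
h'(L)=-\frac{2}{2\ell^*}\cdot\frac{1}{\sqrt{1-L^2/(4(\ell^*)^2)}}=-\frac{1}{\sqrt{(\ell^*)^2-(L/2)^2}}.
\end{equation}
Substituting this into the previous display yields exactly the stated formula.

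The main point needing care is the branch bookkeeping. Specifically, I must justify the factor of $2$, which comes from the two preimages $\pm\theta$ combined with the symmetry of $f_\circ$, and I must confirm that $g$ is monotone on each branch. The branch endpoints behave as follows:
\begin{itemize}
\item $\theta=0$ corresponds to $\ell^{\{2\}}=2\ell^*$, where the Jacobian blows up but remains integrable.
\item $\theta=\pi$ corresponds to $\ell^{\{2\}}=0$.
\end{itemize}
Both endpoints have measure zero, so they do not affect the density.
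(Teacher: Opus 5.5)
Your proposal is correct and follows essentially the same route as the paper: the law of cosines reduces $\ell^{\{2\}}$ to $2\ell^*\cos\frac{\theta}{2}$ of a single turning angle, and the density then comes from a one-variable transformation, with the factor $2$ arising from the two preimages $\pm\theta$ together with the symmetry of $f_\circ$. The only difference is that the paper goes through the CDF rather than your two-branch density formula, and your bookkeeping is the cleaner of the two: the paper writes $P\left(2\ell^*\cos\frac{\theta}{2}\leq \ell^{\{2\}}\right)=2P\left(\theta\leq 2\arccos\left(\frac{\ell^{\{2\}}}{2\ell^*}\right)\right)$, which should have $\theta\geq$ inside the probability, and this sign slip is only absorbed because the paper takes the absolute value of the Jacobian.
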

\begin{proof}
Consider the consecutive points of a CRW $X_{t-1},X_t,X_{t+1}$.  We have $|X_{t-1}X_t|=|X_tX_{t+1}|=\ell^*$ by the properties of $\Lambda$, we are interested in describing the distribution of $\ell_t^{\{2\}}=|X_{t-1}X_{t+1}|$.  Using the cosine rule, The length of the vector between points $X_{t-1}$ and $X_{t+1}$ for all $t$ is simply
   \begin{align}\nonumber
     \lvert X_{t-1}X_{t+1}\rvert^2&=\lvert X_{t-1}X_t\rvert^2+\lvert X_{t}X_{t+1}\rvert^2-2\lvert X_{t-1}X_t\rvert\lvert X_{t}X_{t+1}\rvert\cos{\left(\pi-\theta_{X_t}\right)}\\\nonumber
       \left(\ell_t^{\{2\}}\right)^2&=2\ell^{*2}+2\ell^{*2}\cos{\left(\theta_{X_t}\right)}\\
       \Rightarrow \ell_t^{\{2\}}&=\sqrt{2\ell^{*2}(1+\cos{\left(\theta_{X_t}\right)})}=2\ell^*\cos{\frac{\theta_{X_t}}{2}}
       \label{eqn: 12}
   \end{align}
This shows that the distribution of $\ell^{\{2\}}$ is given in terms of the distribution of $\theta$, with $\ell_t^{\{2\}}$ taking values in $[0,2\ell^*]$ as $\theta_{X_t}\in(-\pi,\pi]$.  Now, as all $\theta_{X_t}$ are i.i.d, to find the distribution, $\Lambda^{\{2\}}(\ell^{\{2\}})$, we have
   \begin{align}\nonumber
       F_{\Lambda^{\{2\}}}(\ell^{\{2\}})&=P(L\leq \ell^{\{2\}})\\\nonumber
       &=P\left(2\ell^*\cos{\frac{\theta}{2}}\leq \ell^{\{2\}}\right)\\\nonumber
       &=2P\left(\theta\leq 2\arccos{\left(\frac{\ell^{\{2\}})}{2\ell^*}\right)}\right)\\
       &=2F_\theta\left(2\arccos{\left(\frac{\ell^{\{2\}})}{2\ell^*}\right)}\right)
   \end{align}
   where $F_{\Lambda^{\{2\}}}, F_\theta$ are the CDFs of $\Lambda^{\{2\}}$ and $f_\circ$ respectively.  The coefficient of 2 appears due to $\theta$ having domain $[-\pi,\pi]$, arccos having the restricted range of $[0,\pi]$ and cosine being an even function.  Hence, the distribution $\Lambda^{\{2\}}$ is given by
   \begin{align}\nonumber
       \Lambda^{\{2\}}(\ell^{\{2\}})=\frac{dF_{\Lambda^{\{2\}}}(\ell^{\{2\}})}{d\ell^{\{2\}}}&=2\frac{dF_\theta}{d\theta}\Bigg\lvert\frac{d\theta}{d\ell^{\{2\}}}\Bigg\rvert\\\nonumber
       &=2\frac{dF_\theta}{d\theta}\Bigg\lvert\frac{d}{d\ell^{\{2\}}}\left(2\arccos{\left(\frac{\ell^{\{2\}}}{2\ell^*}\right)}\right)\Bigg\rvert\\
       &=\frac{2}{\sqrt{\ell^{*2}-\left(\frac{\ell^{\{2\}}}{2}\right)^2}}f_\circ\left(2\arccos{\left(\frac{\ell^{\{2\}}}{2\ell^*}\right)}\right),
   \end{align}
Noting that here we have $\frac{dF_\theta}{d\theta}=f_\circ(\theta)$ and $\theta=2\arccos{\left(\frac{\ell^{\{2\}})}{2\ell^*}\right)}$.
\end{proof}
To demonstrate Corollary 1, Fig 5 plots the predicted results using the analytical expression for $\Lambda^{\{2\}}$ against the results found by simulations.  Results were found by simulating a CRW of length $10^8$ steps, where each step-length was of unit length and turning angles were drawn from a circular distribution.  The simulated step-length distributions (lines) were then found by calculating the distance between subsequent points when the RW is sampled at a rate $r=2$ and subsequently fitting a density distribution to these step-lengths.  The predicted results (points) were given by Eq. 12 from Corollary 1.  The figures demonstrate the validity of the Corollary.

\begin{figure}
    \includegraphics[width=\linewidth]{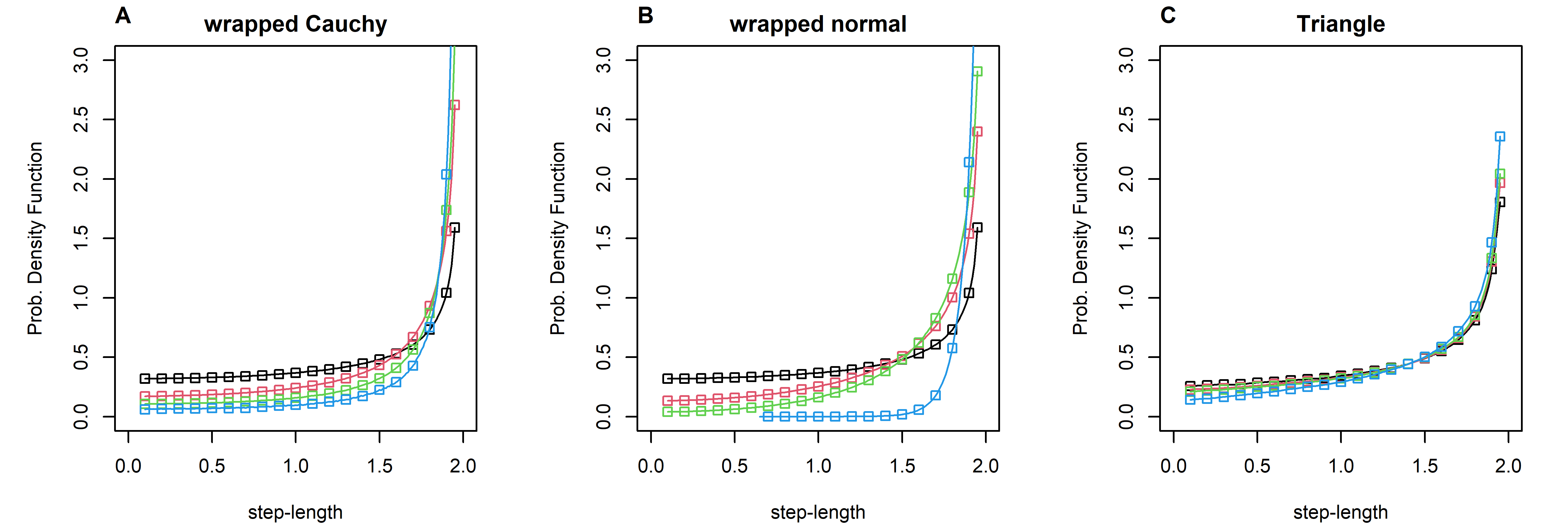}
    \caption{Demonstration of Corollary 1, showing the distribution of step-lengths after subsampling a random walk with fixed step lengths ($\ell^*=1$) at a rate of $r=2$.  Turning angles were taken from zero-centred wrapped Cauchy (A) and wrapped normal distribution (B), in both cases the black, red, green \& blue lines correspond to mean resultant vector (MRV) of $\rho=0.1,0.3,0.5,0.8$ respectively. Turning angles in (C) were taken from a wrapped triangle distribution with parameter values $a=6.28, 3.5, 2.75, 1.6$ (black, red, green \& blue lines respectively), which were chosen to closely mimic the MRV for (A) and (B) (see Appendix A).  In all cases the simulated results are given by the solid lines and the results given by Corollary 1 are shown as points.  All calculations were carried out in `R'.}
\end{figure}

\subsection{Case for sampling rate of $r=2^k$ where $k\geq2$.}

We now look to describe the distribution of the turning angles in the RW formed by subsampling at a rate of $r=2^k$, for integers $k\geq2$.  We will do this by describing the turning angle $\theta_{X_t}^{\{2^k\}}$ in a similar method to that used for $\theta_{X_t}^{\{2\}}$ in the proof of Theorem 1.  

Then look to build up the angles from the sampled RW $\theta_{X_t}^{\{r\}}$ as a linear sum of angles from the initial unsampled RW so as to be of the form, $\theta_{X_t}^{\{r\}}=\sum_\tau a_{t+\tau}\theta_{X_{t+\tau}}$, where $a_{t+\tau}$ are constants.  In this linear sum, each $\theta_{X_{t+\tau}}$ appears once and as each $\theta$ are i.i.d with distribution $f_{\circ}(\theta)$, we find the distribution of $\theta_{X_t}^{\{r\}}$ by convoluting the distributions of the $a_{t+\tau}\theta_{X_t}^{\{r\}}$ terms.\\

\noindent We begin by introducing the notation we will use for the remainder of the work:
\begin{itemize}
\item $(X)^{\{2^{k}\}}$ is the RW formed by sub-sampling the RW $(X)_n$ with sampling rate $r=2^{k}$.
\item $\theta^{\{2^k\}}_{X_t}$ is the turning angle at point $X_t$ for the subsampled RW $(X)^{\{2^{k}\}}$. i.e. the angle between $\overrightarrow{X_{t-2^k}X_t}$ and $\overrightarrow{X_tX_{t+2^k}}$.

\item $f_{\circ}^{\{2^{k}\}}$ is the circular distribution of the turning angles, $\theta_{X_t}^{\{2^k\}}$, for the sub-sampled random walk with rate $2^k$.
\item The intial turning angle distribution for the un-sampled random walk ($r=1\Leftrightarrow k=0$) is given by $f_{\circ}^{\{1\}}$, however, for sake of ease we drop the superscripts in this case.  i.e. $\theta_{X_t}^{\{1\}}=\theta_{X_t}$ and $f_{\circ}^{\{1\}}=f_{\circ}$.
\item $\alpha^{\{2^{k}\}}_{X^-_t}$, is the internal angle of the triangle formed at point $X_t$ and the \textit{previous} two locations of $X_{t-2^{k-1}}, X_{t-2^{k}}$; that is, \[\alpha^{\{2^{k}\}}_{X^-_t}=\angle \left(X_{t-2^k},X_t, X_{t-2^{k-1}}\right).\]
\item $\alpha^{\{2^{k}\}}_{X^+_t}$, is the internal angle of the triangle formed at point $X_t$ and the \textit{subsequent} two locations of $X_{t+2^{k-1}}, X_{t+2^{k}}$; that is, \[\alpha^{\{2^{k}\}}_{X^+_t}=\angle \left(X_{t+2^k},X_t, X_{t+2^{k-1}}\right)\]
\end{itemize}

\begin{theorem} Consider a discrete time CRW in $\mathbb{R}^2$, defined by a zero-centred, symmetric, unimodal turning angle distribution given by $f_\circ(\theta)$ and step-length distribution, $\Lambda(\ell)$, given by any continuous distribution on the positive reals with finite variance.  The distribution of turning angles in the sub-sampled RW of rate $r=2^k$ for $k\geq 2$ is closely given by
\begin{equation}\nonumber
    f^{\{2^k\}}_{\circ}(\theta)=\sum_{m=-\infty}^{\infty}f^{\{2^k\}}(\theta+2m\pi)
\end{equation} where 
    \begin{equation}\label{eq:theorem 2}
        f^{\{2^k\}}(\theta)=\frac{(p-\frac{1}{2})^{k(1-2^{k+1})}}{2^k!(2^k-1)!}\Bigg[f_\circ\left(\frac{\theta}{2^k(p-\frac{1}{2})^k}\right) \Conv_{\tau=1}^{2^k-1} f^{*2}_\circ\left(\frac{\theta}{\tau(p-\frac{1}{2})^k}\right)\Bigg],
    \end{equation}
with $f^{*2}$ the power convolution of $f$ (i.e. $f^{*2}\stackrel{\text{def}}{=}f\ast f$), and $p\in\left(\frac{1}{2},1\right]$ a parameter representing the tortuosity of the RW.
\end{theorem}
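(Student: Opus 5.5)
The plan is to repeat the geometric decomposition used in the proof of Theorem 1 once per level of sampling. At level $j$ I will express $\theta^{\{2^j\}}_{X_t}$ linearly in terms of level-$(j-1)$ angles, iterate down to level $0$, and so obtain $\theta^{\{2^k\}}_{X_t}$ as a linear combination $\sum_\tau a_\tau \theta_{X_{t+\tau}}$ of the original i.i.d. turning angles. The density then follows by convolution and Remark \ref{rem:1}, exactly as in Theorem 1.

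\textbf{Step 1 (one level of the recursion).} As in Eq.~\ref{eqn: 7}, consider the triangle $X_{t-2^j}X_{t-2^{j-1}}X_t$ and its mirror image on the forward side. This gives
\[
\theta^{\{2^j\}}_{X_t}=\pm\theta^{\{2^{j-1}\}}_{X_t}\pm\alpha^{\{2^j\}}_{X^-_t}\pm\alpha^{\{2^j\}}_{X^+_t}.
\]
The two base angles of the backward triangle sum to $\theta^{\{2^{j-1}\}}_{X_{t-2^{j-1}}}$. Unlike Theorem 1, the triangle is no longer isosceles, because the sub-sampled step lengths $\ell^{\{2^{j-1}\}}$ are random. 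So instead of the exact split $\tfrac12$, I will write $\alpha^{\{2^j\}}_{X^\pm_t}\approx c\,\theta^{\{2^{j-1}\}}_{X_{t\pm2^{j-1}}}$ with a constant share $c$.

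The finite-variance assumption on $\Lambda$ is what justifies this step. The two sides adjacent to the middle vertex are i.i.d. sums of the same number of steps, and their fluctuations are of relative order $O(2^{-(j-1)/2})$, so the expected share is symmetric. The tortuosity parameter $p\in(\tfrac12,1]$ will be introduced as the calibration of this share. The aim is to get the per-level scaling factor to be $(p-\tfrac12)$ after accounting for signs, with $p=1$ recovering the isosceles value $\tfrac12$ of Theorem 1.

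\textbf{Step 2 (unrolling).} Iterating Step 1 from $j=k$ down to $j=1$ and collecting terms, I expect a tent-shaped coefficient pattern. The centre angle $\theta_{X_t}$ gets weight $2^k(p-\tfrac12)^k$, and each pair $\theta_{X_{t\pm s}}$ gets weight $(2^k-s)(p-\tfrac12)^k$, for $s=1,\dots,2^k-1$. Reindexing with $\tau=2^k-s$, each $\tau\in\{1,\dots,2^k-1\}$ appears exactly twice.

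This matches the small-angle heuristic: the chord direction is approximately the average heading over a block, and the difference of two adjacent block averages has exactly these triangular weights. I will prove the pattern by induction on $k$, checking that the three pieces at level $k$ (centre plus two halves) combine into the tent at level $k$.

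\textbf{Step 3 (convolution).} The angles are i.i.d. and $f_\circ$ is symmetric, so the $\pm$ signs are irrelevant. By Remark \ref{rem:1}, each term $a\theta$ has density $\frac{1}{|a|}f_\circ(\theta/a)$. Convolving the centre term with the $2^k-1$ pairs gives
\[
f_\circ\!\left(\tfrac{\theta}{2^k(p-\frac12)^k}\right) \Conv_{\tau=1}^{2^k-1} f^{*2}_\circ\!\left(\tfrac{\theta}{\tau(p-\frac12)^k}\right).
\]
The prefactor is $\prod 1/|a|$. The centre term contributes $\frac{1}{2^k(p-\frac12)^k}$, and the pairs contribute $\prod_{\tau=1}^{2^k-1}\frac{1}{\tau^2(p-\frac12)^{2k}}$. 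Together these give
\[
\frac{(p-\tfrac12)^{-k(2^{k+1}-1)}}{2^k!\,(2^k-1)!},
\]
which is the stated constant. The support of the result is now unbounded in the approximation, so I wrap it over all $m\in\mathbb{Z}$ to obtain $f^{\{2^k\}}_\circ$.

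\textbf{Main obstacle.} Step 1 is the hard part, because it is where ``closely given'' enters. Replacing a random, non-isosceles split of the turning angle by a deterministic fraction, and then treating the level-$(j-1)$ angles appearing in different triangles as independent, is an approximation rather than an identity. The latter is exactly the source of the autocorrelation noted by \citet{nams2013sampling}. I would first try to bound the error of the split via the law of sines and the concentration of $\ell^{\{2^{j-1}\}}$. Failing a clean bound, I would present $p$ as an effective parameter fitted to the walk and state the result as an approximation, justified by the exact case $k=1$ and by simulation.
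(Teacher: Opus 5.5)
\textbf{Gap: how $p$ enters the recursion.} Your Step~3 bookkeeping matches the paper: the pairing of the $\pm\tau$ terms into $f^{*2}_\circ$, the prefactor $(p-\tfrac12)^{k(1-2^{k+1})}/(2^k!\,(2^k-1)!)$, and the wrapping. So does the tent you predict in Step~2. But Step~2 does not follow from Step~1 as you have set it up. Write one level of the recursion as
\[
\theta^{\{2^j\}}_{X_t}=A_j\,\theta^{\{2^{j-1}\}}_{X_t}+B_j\,\theta^{\{2^{j-1}\}}_{X_{t-2^{j-1}}}+B_j\,\theta^{\{2^{j-1}\}}_{X_{t+2^{j-1}}}.
\]
Unrolling for $k=2$ gives weights $B_2B_1$, $B_2A_1$, $B_1(A_2+B_2)$, $A_2A_1$ on $\theta_{X_{t\pm3}},\theta_{X_{t\pm2}},\theta_{X_{t\pm1}},\theta_{X_t}$. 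Matching the stated $(p-\tfrac12)^2(1,2,3,4)$ forces $A_j=2B_j$. In other words, the side-to-centre share must stay exactly $\tfrac12$ at every level, and $p$ can only enter as a common multiplier of all three terms. So $p$ cannot be a calibration of the share $c$. If you keep the centre coefficient at $\pm1$ and set $c=p-\tfrac12$, you get $c^2,\,c,\,c+c^2,\,1$, which is the claimed tent only when $p=1$. Your three-piece induction then already fails at $\tau=0$: the centre weight must go from $2^{k-1}(p-\tfrac12)^{k-1}$ to $2^k(p-\tfrac12)^k$, which needs a factor $2p-1$ on the centre term.

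\textbf{The paper's mechanism.} In the paper, $p$ is the probability that each of the three geometric signs in $\theta^{\{2^j\}}_{X_t}=\pm\theta^{\{2^{j-1}\}}_{X_t}\pm\alpha^-\pm\alpha^+$ is positive. The isosceles assumption of Remark~2 keeps the split at exactly $\tfrac12$. Then, taking $p_j=p$ at every level, each random sign is replaced by its mean $\E[(-1)^{\mathbbm{1}_{1-p}}]=2p-1$. This multiplies all three terms of a level by the same factor, giving $A_j=2p-1$ and $B_j=p-\tfrac12$, which is exactly what the induction in Claim~1 uses.

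\textbf{Why the signs are not irrelevant.} Your remark that the signs drop out is premature. Symmetry of $f_\circ$ removes the sign of each \emph{collected} coefficient. During unrolling, however, the same original angle is reached along several paths. For $k=2$, $\theta_{X_{t-1}}$ arises through both $\theta^{\{2\}}_{X_t}$ and $\theta^{\{2\}}_{X_{t-2}}$, so its coefficient is $\pm\tfrac12\pm\tfrac14$, i.e.\ $\tfrac34$ or $\tfrac14$ in absolute value, depending on the relative signs. The averaging step is what resolves this. Conversely, you do not need the level-$(j-1)$ angles to be independent, since unrolling to level $0$ and collecting coefficients handles their overlap.

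\textbf{Two smaller points.} Your small-angle block-average heuristic is a good independent check of the $p=1$ tent. However, your plan to bound the split through concentration of $\ell^{\{2^{j-1}\}}$ will not work. These chord lengths are lengths of vector sums and depend on the turning angles, so they differ even when $\Lambda$ is a point mass. For large $j$ they are in the diffusive regime, with relative fluctuations of order one rather than $O(2^{-(j-1)/2})$. The isosceles step therefore has to remain a modelling assumption, as in Remark~2.
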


\begin{proof}
Let us consider a RW which has been sampled at the rate $r=2^k$ (with $k\geq 2$).  At the point $X_t$, we wish to describe the turning angle $\theta^{\{2^k\}}_{X_t}$, (the angle between $\overrightarrow{X_{t-2^k}X_t}$ and $\overrightarrow{X_tX_{t+2^k}}$). Following the arguments within the proof of Theorem 1 (see Eq. \ref{eqn: 7}), we have at point $X_t$;
\begin{equation}
   \theta_{X_t}^{\{2^{k}\}}=\pm\theta_{X_t}^{\{2^{k-1}\}}\pm\alpha_{X^-_t}^{\{2^{k}\}}\pm\alpha_{X^+_t}^{\{2^{k}\}}.
   \label{eqn: pre-intro of p}
\end{equation}

If we let $p_k$ be the probability that the geometry of the RW requires a positive value in the RHS terms of Eq \ref{eqn: pre-intro of p}, then we can define $\mathbbm{1}_{1-p_k}$ as an indicator function, which takes the value $1$ with probability $1-p_k$ and $0$ with probability $p_k$, so that,
\begin{equation} \nonumber
(-1)^{\mathbbm{1}_{1-p_k}} =\left\{ \begin{array}{ll}
    1 & \text{with probability } p_k
    \\ 
    -1 & \text{with probability } 1-p_k .
\end{array} \right.
\end{equation}
Therefore we can write Eq \ref{eqn: pre-intro of p} as

\begin{equation}
   \theta_{X_t}^{\{2^{k}\}}=(-1)^{\mathbbm{1}_{1-p_k}}\theta_{X_t}^{\{2^{k-1}\}}+(-1)^{\mathbbm{1}_{1-p_k}}\alpha_{X^-_t}^{\{2^{k}\}}+(-1)^{\mathbbm{1}_{1-p_k}}\alpha_{X^+_t}^{\{2^{k}\}} .
   \label{eqn: 18}
\end{equation}
This value $p_k$ depends on the geometry of the underlying RW, which for a general CRW is known to be affected by both the step-length distribution and the turning angle distribution \citep{benhamou2004reliably}.  The subscript $k$ indicates that $p_k$ is dependent upon the sampling rate.  It is assumed that $p_k$ is constant for each term in Eq. \ref{eqn: 18}.\\

\begin{rem}
In the proof of Theorem 1, a fixed step length distribution allowed for the precise computation of the turning angles in the subsampled walk by forming isosceles triangles (see Fig 2). When sampling at higher rates and for other initial step-length distributions, this is no longer possible as the triangles formed following a similar process are not necessarily isosceles. However, when considering the RW to be long (tending to infinity), it is reasonable to assume, on average, successive steps are of similar size. Therefore, we assume the triangulation technique deployed in the proof of Theorem 1 holds true for all sampling rates and initial step-length distributions.  This approximation can be seen to be appropriate in Appendix B, where the step length distribution is shown to have a negligible effect on turning angle distributions as the RW process is sub-sampled.
\end{rem}

Therefore, proceeding with the assumption in Remark 2, we have the triangles $\bigtriangleup X_{t-2^k}X_{t-2^{k-1}}X_t$ and $\bigtriangleup X_{t+2^k}X_{t+2^{k-1}}X_t$ are isosceles and we obtain 
\begin{align}\nonumber
  \alpha_{X^-_t}^{\{2^{k}\}}=&\frac{1}{2}{\theta^{\{2^{k-1}\}}_{X_{t-2^{k-1}}}}\\
  \alpha_{X^+_t}^{\{2^{k}\}}=&\frac{1}{2}{\theta^{\{2^{k-1}\}}_{X_{t+2^{k-1}}}}.
  \label{eqn: 19}
\end{align}

Substituting into Eq. \ref{eqn: 18} gives 
\begin{equation} \label{eq_theta{2^k}}
 \theta_{X_t}^{\{2^{k}\}}=(-1)^{\mathbbm{1}_{1-p_k}}\theta_{X_t}^{\{2^{k-1}\}}+(-1)^{\mathbbm{1}_{1-p_k}}\frac{1}{2}{\theta^{\{2^{k-1}\}}_{X_{t-2^{k-1}}}}+(-1)^{\mathbbm{1}_{1-p_k}}\frac{1}{2}{\theta^{\{2^{k-1}\}}_{X_{t+2^{k-1}}}}
 \end{equation}
 
Iteratively applying Eq.~\ref{eq_theta{2^k}}  will yield a linear expression for $\theta_{X_t}^{\{2^{k}\}}$ in terms of $\theta_{X_{t+\tau}}^{\{1\}}=\theta_{X_{t+\tau}}$.  Then, similar to the proof of Theorem 1, the distribution for $\theta^{\{2^{k}\}}$ can then be found by convolution.

Note that $k$ iterations of Eq.~\ref{eq_theta{2^k}} are required to reduce the exponent of terms on the RHS from $2^k$ to 1. As each iteration will add/subtract a term of the form $2^{k-j}$ to the subscripts of $X_t$ (see Eq.\ref{eqn: 19}), (where $j=\{1,2,\ldots,k\}$), for terms of the form $X_{t+\tau}$, the maximum value $\tau$ can be is $\tau=2^{k-1}+2^{k-2}+\cdots+2+1=2^k-1$, and by the symmetry of Eq.~\ref{eq_theta{2^k}}, $\tau$ will have a minimum of $t-(2^k-1)$.  

We therefore have 
\begin{equation}
    \theta_{X_t}^{\{2^{k}\}}=\sum_{\tau=-(2^k-1)}^{2^k-1} a^{\{2^k\}}_{t+\tau} \theta_{X_{t+\tau}}
    \label{eqn: 21}
\end{equation}
where the coefficients $a^{\{2^k\}}_{t+\tau}$ are to be determined.

\begin{sidewaysfigure}
    \centering
    \includegraphics[width=7.5in]{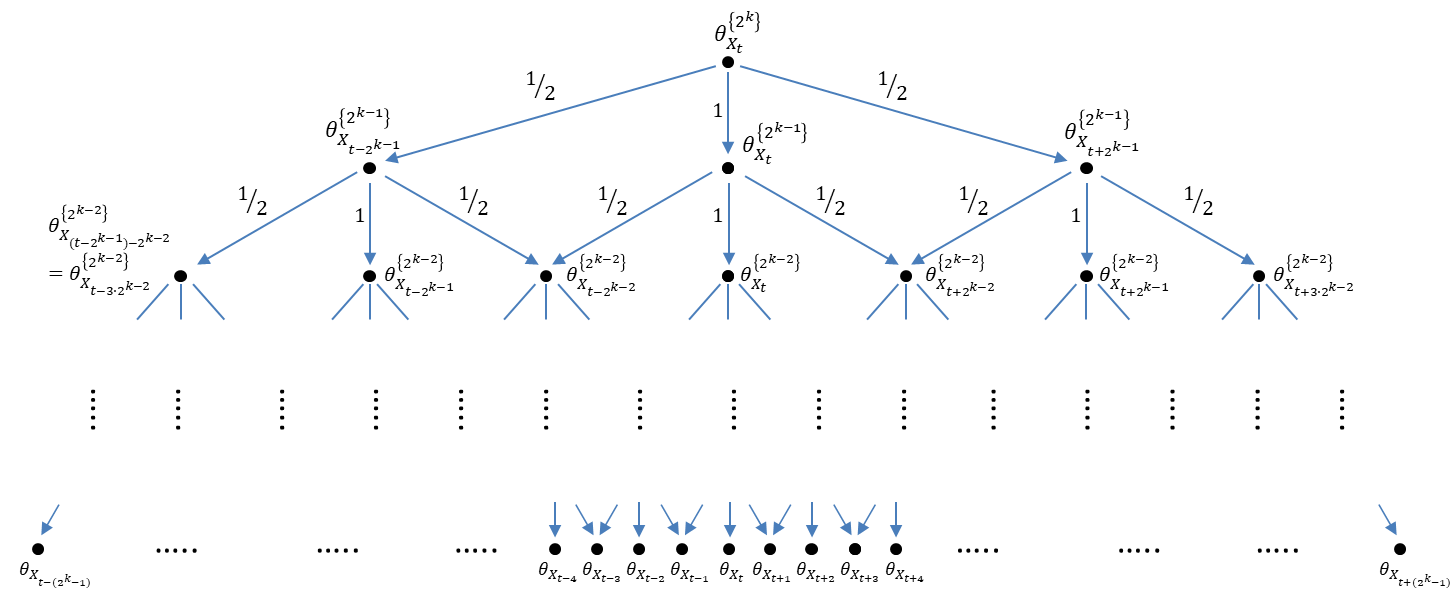}
    \caption{Directed graph showing the recursive relations given in Eq.~\ref{eq_theta{2^k}}. Summing the product of edge weights (sign dependent on probabilities $p_j$) along all paths from the root to a leaf gives the coefficient of that leaf's label in Eq.~\ref{eqn: 21}.}
    \label{fig:tree_turning_angles}
\end{sidewaysfigure}

To help calculate these coefficients, we depict the recursive relations given in Eq.~\ref{eq_theta{2^k}} as a directed graph (see Fig.~\ref{fig:tree_turning_angles}), where vertices correspond to the angle terms in the RHS of Eq. \ref{eq_theta{2^k}}, and outgoing edges from any vertex correspond to one iteration of Eq.~\ref{eq_theta{2^k}} on that vertex. Edge weights corresponding to the absolute value of the coefficients in Eq.~\ref{eq_theta{2^k}}. Note that each horizontal `level' only includes angles from the same sampling rate and as we must apply Eq.~\ref{eq_theta{2^k}} $k$ times, there will be precisely $k$ levels.  The final level occurs when no more iterations of Eq.~\ref{eq_theta{2^k}} can be applied, which is precisely when all $\theta^{\{r\}}$ have sampling rate $r=1$ and, are therefore from the initial turning angle distribution. For example, a sampling rate of $r=2$ applies one iteration of Eq.~~\ref{eq_theta{2^k}} and therefore would only require one level of Fig.~\ref{fig:tree_turning_angles} (see Appendix A for a further example with sampling rate $r=4=2^2$).\\

Recalling that for the coefficient of each term in Eq.~\ref{eq_theta{2^k}} there is a probability $p_j$ that it is positive and $1-p_j$ negative, where $p_j$ is dependent on $j$, the level of the graph (iteration of Eq.~\ref{eq_theta{2^k}}), with $j\in\{1,\ldots,k\}$.  Note, following convention we assume $j=0$ to be the top most level and $j=k$ to be the bottom level in Fig.~\ref{fig:tree_turning_angles}. The coefficient, $a^{\{2^k\}}_{t\pm\tau}$, for any given $\theta_{t\pm\tau}$, (which would be obtained via repeated iteration of Eq.~\ref{eq_theta{2^k}}), can then be calculated directly by considering all possible paths in Fig.~\ref{fig:tree_turning_angles} that start from the top vertex of $\theta^{\{2^k\}}_{X_t}$ and end at the vertex of $\theta_{t\pm\tau}$. For each possible path, the path weighting will be given by multiplying the weights on each individual edge along that path; this gives the coefficient of one particular occurrence of $\theta_{t\pm\tau}$ in the final expression (see Eq.~21), obtained via repeated iteration of Eq.~\ref{eq_theta{2^k}}. The overall value for the coefficient $a^{\{2^k\}}_{t\pm\tau}$ is found by summing over all the path weightings, for $\theta_{t\pm\tau}$.\\

As an example, we can see that the only path to $\theta_{t-(2^k-1)}$ is found by taking the left-most edge at each level, all of which have weights $(-1)^{\mathbbm{1}_{1-p_j}}\left(\frac{1}{2}\right)$.  As there are $k$ levels we have that $a^{\{2^k\}}_{t-(2^k-1)}=(-1)^{\mathbbm{1}_{1-p_1}+\mathbbm{1}_{1-p_2}+\cdots+\mathbbm{1}_{1-p_k}}\left(\frac{1}{2}\right)^k$ - and by symmetry the coefficient for $\theta_{t+(2^k-1)}$ has the same expression i.e. $a^{\{2^k\}}_{t+(2^k-1)}=(-1)^{\mathbbm{1}_{1-p_1}+\mathbbm{1}_{1-p_2}+\cdots+\mathbbm{1}_{1-p_k}}\left(\frac{1}{2}\right)^k$. \\

However, to calculate the coefficients without needing to consider every path into each vertex in the bottom level, we can consider how any vertex is connected to those in the level above it. 

For vertex $\theta_{X_{t\pm\tau}}^{\{2^{k-j}\}}$ in level $j$, we define $a^j_{t\pm\tau}$ to be the sum of all paths from the vertices in the higher level of $j-1$ to vertex $\theta_{X_{t\pm\tau}}^{\{2^{k-j}\}}$. Then

\begin{equation} \label{eq:a^j_exact}
    a^{j}_{t\pm\tau}=
\begin{cases}
    (-1)^{\mathbbm{1}_{1-p_j}}a^{j-1}_{t\pm\frac{\tau}{2}},\quad\tau\text{ even},\\
    \frac{1}{2}(-1)^{\mathbbm{1}_{1-p_j}}a^{j-1}_{t\pm(\frac{\tau -1}{2})},\quad \tau=2^{j}-1\\
    \frac{1}{2}\left[(-1)^{\mathbbm{1}_{1-p_j}}a^{j-1}_{t\pm\frac{\tau-1}{2}}+(-1)^{\mathbbm{1}_{1-p_j}}a^{{j-1}}_{t\pm\frac{\tau+1}{2}}\right],\; \text{else,}    
\end{cases}
\end{equation}
for $j=1,\dots, k$, with $a^0_{t\pm\tau}=1$.  Note, at the final level with $j=k$, the coefficients $a^{j}_{t\pm\tau}$ are precisely those of $a^{\{2^k\}}_{t\pm\tau}$ in Eq.~\ref{eqn: 21}.  
In Eq.~\ref{eq:a^j_exact}, the first case corresponds to those vertices in Fig~\ref{fig:tree_turning_angles} with one vertical arrow entering them (e.g. $\theta_{X_t}^{\{2^{k-2}\}}$); the second case corresponds to the extreme right/left branches with one left/right arrow entering them (e.g. $\theta_{X_{t-2^{k-1}}}^{\{2^{k-1}\}}$); and the third case is for those vertices which have two arrows entering in to them (e.g. $\theta_{X_{t-2^{k-2}}}^{\{2^{k-2}\}}$).

This demonstrates that for all $\tau\neq0$, the coefficient $a^{\{2^k\}}_{t\pm\tau}$ is a function of additions and subtractions of all reachable vertices above it, and is dependent on $p_1, p_2, \ldots,p_k$. However, since the values $p_1,\dots p_k$ are unknown, this expression becomes intractable.  Therefore, we assume $p_j=p$ for all $j=1,\dots, k$, and take the expectation of  $a^j_{t\pm\tau}$ in Eq.~\ref{eq:a^j_exact}.  These approximations gives new coefficients, $\tilde{a}^j_{t\pm\tau}$ given by

\begin{equation}  \label{eq:exp_a-tilde}
   \tilde{a}^{j}_{t\pm\tau}= \E\left[a^{j}_{t\pm\tau}\right]=
\begin{cases}
    (2p-1)\tilde{a}^{j-1}_{t\pm\frac{\tau}{2}},\quad\tau\text{ even},\\
    \frac{1}{2}(2p-1)\tilde{a}^{j-1}_{t\pm(\frac{\tau -1}{2})},\quad \tau=2^{j}-1\\
    \frac{1}{2}(2p-1)\left[\tilde{a}^{j-1}_{t\pm\frac{\tau-1}{2}}+\tilde{a}^{j-1}_{t\pm\frac{\tau+1}{2}}\right],\; \text{else}    
\end{cases}
\end{equation}

for $j=1,\dots, k$, and  $\tilde{a}^{0}_{t\pm\tau}=1$.  Where we have used 
\begin{equation}
    \E\left[(-1)^{\mathbbm{1}_{1-p}}\right]=(+1)p + (-1)(1-p)=(2p-1)
\end{equation} and assumed that $\left[(-1)^{\mathbbm{1}_{1-p}}\tilde{a}^{j}_{t\pm\frac{\tau-1}{2}}+(-1)^{\mathbbm{1}_{1-p}}\tilde{a}^{j}_{t\pm\frac{\tau+1}{2}}\right]$ is linear in respect to the expectation operator. \\

We now compute coefficients $\tilde{a}^{j}_{t\pm\tau}$, which replace ${a}^{j}_{t\pm\tau}$ in Eq.~\ref{eqn: 21}, to give a close approximation for $\theta_{X_t}^{\{r\}}$.  These coefficients can be found by repeatedly applying Eq.~\ref{eq:exp_a-tilde}, and can be written more compactly using the follwoing Claim.

\begin{claim}
The coefficients, $\tilde{a}^{\{2^{k}\}}_{t\pm\tau}$, are given by \begin{equation} \label{eq:claim_a-tilde}
    \tilde{a}^{\{2^k\}}_{t\pm\tau}=\tilde{a}^{k}_{t\pm\tau}=\frac{2^k-\tau}{2^k}(2p-1)^k
\end{equation}
where $k\geq 0$, and $\tau =0,1,\ldots,2^k-1$.
\label{claim:1}
\end{claim}

Claim \ref{claim:1} can be shown through induction. The case $k=0$ follows trivially from Eq.~\ref{eq:exp_a-tilde}, since $\tilde{a}^{\{1\}}_t=\tilde{a}^0_t=1$ and substituting $k=0$ in Eq.~\ref{eq:claim_a-tilde} gives $\tilde{a}^0_{t+\tau}=1-\tau$, where $\tau$ can only take value $(2^0-1)=0$.

We proceed with induction on $k$. Assume Eq. \ref{eq:claim_a-tilde} holds for $k$, now consider the expression for $k+1$. Working through each of the three cases separately, we firstly assume $\tau$ is even, case (i), then by Eq. \ref{eq:exp_a-tilde}
\begin{align}\nonumber
\tilde{a}^{\{2^{k+1}\}}_{t\pm\tau}=\tilde{a}^{k+1}_{t\pm\tau}
&=(2p-1)\tilde{a}^{k}_{t\pm\frac{\tau}{2}}
\\ \nonumber
&=(2p-1)\frac{2^k-\tau/2}{2^k}(2p-1)^k
\\ \nonumber
&= \frac{2^{k+1}-\tau}{2^{k+1}}(2p-1)^{k+1},
\end{align}
as required.  Next, case (ii), assume $\tau=2^{k+1}-1$ and note that for this, 
\begin{align} \nonumber
\frac{\tau-1}{2} &=\frac{2^{k+1}-2}{2}
\\ \nonumber
&=2^k-1.
\end{align}

Therefore for $\tau=2^{k+1}-1$ and by Eq. \ref{eq:exp_a-tilde}, we have
\begin{align} \nonumber
\tilde{a}^{\{2^{k+1}\}}_{t\pm\tau}=\tilde{a}^{k+1}_{t\pm\tau}
&= \frac{1}{2}(2p-1)\tilde{a}^{k}_{t\pm(\frac{\tau -1}{2})}
\\ \nonumber
&=\frac{1}{2}(2p-1) \frac{2^k-(\tau-1)/2}{2^k}(2p-1)^k
\\ \nonumber
&= \frac{2^k-(2^k-1)}{2^{k+1}}(2p-1)^{k+1}
\\ \nonumber
&= \frac{2^{k+1}-(2^{k+1}-1)}{2^{k+1}}(2p-1)^{k+1}
\\ \nonumber
&= \frac{2^{k+1}-\tau}{2^{k+1}}(2p-1)^{k+1},
\end{align}
as required.  Finally, we consider the case of odd $\tau<2^{k+1}-1$. Applying Eq. \ref{eq:exp_a-tilde},

\begin{align*}
\tilde{a}^{\{2^{k+1}\}}_{t\pm\tau}
&= \frac{1}{2}(2p-1)\left[\tilde{a}^{k}_{t\pm\frac{\tau-1}{2}}+\tilde{a}^{k}_{t\pm\frac{\tau+1}{2}}\right]
\\
&=\frac{1}{2}(2p-1)\left[ \frac{2^k- (\tau-1)/2}{2^k}(2p-1)^k
+ \frac{2^k-(\tau+1)/2}{2^k}(2p-1)^k
\right]
\\
&=\frac{1}{2^{k+1}}\left[ 2^k-\frac{\tau-1}{2}
+2^k-\frac{\tau+1}{2}
\right](2p-1)^{k+1}
\\
&=\frac{2^{k+1}-\tau}{2^{k+1}}(2p-1)^{k+1}. 
\end{align*}

Thus in all three cases, given the inductive hypothesis, Eq.~\ref{eq:claim_a-tilde} holds for $k+1$, therefore the claim holds for all $k\geq 0$.

We now note that, Eq.~\ref{eq:claim_a-tilde} can be written equivalently as: 
\begin{equation} \label{eq:claim_a-tilde_nopm}
        \tilde{a}^{\{2^k\}}_{t+\tau}=\frac{2^k-\lvert\tau\rvert}{2^k}(2p-1)^k,
\end{equation}
which allows $\tau$ to take integer values $\{-(2^k-1),\ldots,2^k-1\}$. 
Hence, using Eq.~\ref{eqn: 21} and Eq.~\ref{eq:claim_a-tilde_nopm} we can write the angle at $X_t$ of the subsampled random walk, $\theta_{X_t}^{\{2^k\}}$, as
\begin{align}
    \theta_{X_t}^{\{2^k\}}=&\sum_{\tau=-(2^k-1)}^{2^k-1} \tilde{a}^{\{2^k\}}_{t+\tau} \theta_{X_{t+\tau}}\\\nonumber
    =&\sum_{\tau=-(2^k-1)}^{2^k-1} \frac{2^k-\lvert\tau\rvert}{2^k}(2p-1)^k\theta_{X_{t+\tau}}\\
   =&\sum_{\tau=-(2^k-1)}^{2^k-1}(p-\frac{1}{2})^k(2^k-\lvert\tau\rvert)\theta_{X_{t+\tau}}. \label{eq:28}   
\end{align}

Therefore, as  $\theta_{X_t}^{\{2^k\}}$ can be written as a linear sum of the terms $\tilde{a}^{\{2^k\}}_{t+\tau} \theta_{X_{t+\tau}}$, where all $\theta_{X_{t+\tau}}$ are i.i.d,  the distribution of $\theta_{X_t}^{\{2^k\}}$, can be found by a convolution of the distributions for the $\tilde{a}^{\{2^k\}}_{t+\tau} \theta_{X_{t+\tau}}$ terms.

Let us denote the distribution of $\tilde{a}^{\{2^k\}}_{t+\tau} \theta_{X_{t+\tau}}$ as $f_\tau$, then using Eq \ref{eqn: aX dist} from Remark 1, and noting that all $\theta_{X_{t+\tau}}$ are modelled by the distribution $f_\circ(\theta)$, we have
\begin{equation}\label{eqn:29}
    f_\tau(\theta)=\frac{1}{\lvert(p-\frac{1}{2})^k\rvert\lvert2^k-\lvert\tau\rvert\rvert}f_\circ\left(\frac{\theta}{(p-\frac{1}{2})^k(2^k-\lvert\tau\rvert)}\right).
\end{equation}

 We simplify this expression by noting $|2^k-|\tau||= 2^k-|\tau|$, since $\tau$ only takes integer values in $\{-(2^k-1),\ldots,(2^k-1)\}$.  Similarly, as $|(p-\frac{1}{2})|^k$ is symmetric around $p=1/2$ for $p\in[0,1]$ and all non-negative $k$, we can restrict $p$ to be in $(\frac{1}{2},1]$ and so $|(p-\frac{1}{2})|^k= (p-\frac{1}{2})^k$. Note, this restriction on $p$ also means $(p-\frac{1}{2})^k>0$ inside the $f_\circ$ function in Eq.~\ref{eqn:29}, however since $f_\circ$ is a symmetric distribution around $0$ (i.e. $f_\circ(\theta)=f_\circ(-\theta)$), this restriction does not affect the convolution. 
 Therefore, we have
\begin{equation}
    f_\tau(\theta)=\frac{1}{(p-\frac{1}{2})^k(2^k-\lvert\tau\rvert)}f_\circ\left(\frac{\theta}{(p-\frac{1}{2})^k(2^k-\lvert\tau\rvert)}\right).
\end{equation}
Returning to Eq.~\ref{eq:28} and using the expression for $f_\tau$ given above, we have 
\begin{align}\nonumber
f^{\{2^{k}\}}(\theta)&=\Conv_{\tau=-(2^{k}-1)}^{2^{k}-1} f_{\tau}(\theta)\\\nonumber
&=\Conv_{\tau=-(2^{k}-1)}^{2^{k}-1}\frac{1}{(p-\frac{1}{2})^k(2^k-\lvert\tau\rvert)}f_\circ\left(\frac{\theta}{(p-\frac{1}{2})^k(2^k-\lvert\tau\rvert)}\right)\\\nonumber
&=\prod_{\tau=-(2^{k}-1)}^{2^{k}-1}\frac{1}{(p-\frac{1}{2})^k(2^k-\lvert\tau\rvert)}\Conv_{\tau=-(2^{k}-1)}^{2^{k}-1}f_\circ\left(\frac{\theta}{(p-\frac{1}{2})^k(2^k-\lvert\tau\rvert)}\right)\\
&=\left[\frac{1}{(p-\frac{1}{2})^k}\right]^{2\cdot(2^k-1)+1}\frac{1}{2^k\left[(2^k-1)!\right]^2}\Conv_{\tau=-(2^{k}-1)}^{2^{k}-1}f_\circ\left(\frac{\theta}{(p-\frac{1}{2})^k(2^k-\lvert\tau\rvert)}\right)\label{eq:31}
\end{align}
If we consider the expression inside the $f_\circ(\cdot)$ in Eq. \ref{eq:31}, we note that as $\tau$ takes integer values $\{-(2^k-1),\ldots,(2^k-1)\}$ and due to the absolute function, we have that $\tau$ takes every value from $1$ to $2^k-1$ twice with $\tau=0$ the only non-repeated term.  Therefore, the convolution in Eq.~\ref{eq:31} can be written using only the positive terms for $\tau$ and the power convolution, $f^{*2}$, along with the lone $\tau=0$ term.  This, and noting that $2^k\left[(2^k-1)!\right]^2=2^k!(2^k-1)!$, allows Eq. \ref{eq:31} to be written as 
\begin{equation}
f^{\{2^{k}\}}(\theta)=\frac{(p-\frac{1}{2})^{k(1-2^{k+1})}}{2^k!(2^k-1)!}\Bigg[f_\circ\left(\frac{\theta}{2^k(p-\frac{1}{2})^k}\right) \Conv_{\tau=1}^{2^k-1} f^{*2}_\circ\left(\frac{\theta}{\tau(p-\frac{1}{2})^k}\right)\Bigg]
\end{equation}

where each $f_\circ\left(\frac{1}{\tau(p-\frac{1}{2})^k}\theta\right)$ distribution is defined to be $0$ for all $\theta\notin [-\pi\tau(p-\frac{1}{2})^k,\pi\tau(p-\frac{1}{2})^k]$, similar to the case for $k=2$ in Theorem 1.

Finally, we denote $f^{\{2^k\}}_{\circ}$ as the circular distribution found by wrapping $f^{\{2^{k}\}}(\theta)$ around the unit circle, to give a distribution with domain $(-\pi,\pi]$; that is 
\begin{equation}
    f^{\{2^k\}}_{\circ}(\theta)=\sum_{m=-\infty}^{\infty}f^{\{2^{k}\}}(\theta+2m\pi),
\end{equation} 
as required.
\end{proof}

\subsubsection{Accuracy of Theorem 2}
We now demonstrate the accuracy of the expression for $f_{\circ}^{\{2^{k}\}}$ given in Theorem 2, by comparing it to simulated results, similar to Section 3.1.1 for Theorem 1.

Fig \ref{Fig: Theorem 2} compares the distributions of turning angles, when 
a CRW featuring fixed step-lengths of unit length and turning angle distribution given by either a wrapped normal or wrapped Cauchy, is sub-sampled at rates of $r=2$ (green), $r=4$ (cyan), $r=8$ (blue).  In each panel solid lines represent the distribution found by simulating the RW and the points show the predicted results found using Theorem 2 and Eq. \ref{eq:28}.  In each plot and for each sub-sampling rate, the value of the parameter $p$ was found by best-fitting the predicted curve (points) to the observed results (solid lines) - further discussion of this is given in section 3.3.

The results show an accurate fit, with only small discrepancies observable at the tails (values close to $\pm\pi$) of the sub-sampled distributions.  It should be noted that these results are sensitive to the precise value of the parameter $p$ which is explored further in the following section.

\begin{figure}   
    \centerline{\includegraphics[width=\linewidth]{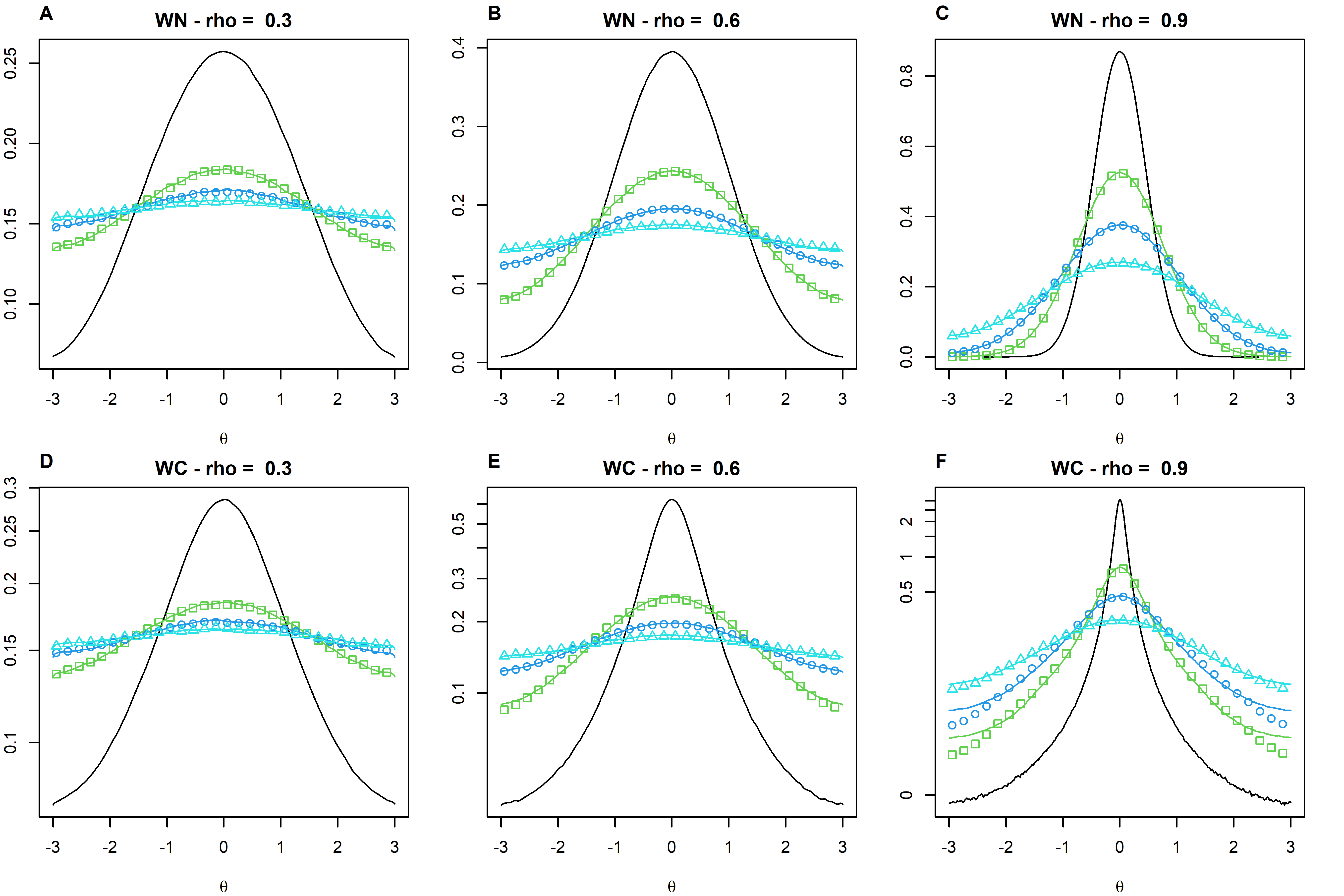}}
    \caption{Panel showing the predicted results from Eq.\ref{eq:theorem 2} in Theorem 2 (points) against simulated results (solid lines).  In all plots, the subsampling rates shown are for $r=2^k$ with $k=2$ (green), $k=3$ (blue), $k=4$ (cyan).  The initial distributions were a wrapped normal (A-C) and a wrapped Cauchy (D-F), with mean resultant vector length of $\rho=0.3$ (A \& D), $\rho=0.6$ (B \& E), and $\rho=0.9$ (C \& F).  In all cases the value for $p$ was best fitted for each subsampling and $\rho$ value.  The black solid line shows the initial distribution of turning angles in the RW.  Note, due to the peakedness of the wrapped Cauchy distribution, the $y$-axis has been log transformed to aid visualisation in plots (D-F).}
\label{Fig: Theorem 2}\end{figure}

\subsection{The parameter $p$.}

Here we briefly explore the tuning parameter $p$ as required in Theorem 2.  As discussed, the parameter $p$ is linked to the geometry of the underlying RW, which for a general CRW can be described by the `sinuosity' of the movement \citep{benhamou2004reliably}.  Therefore, the parameter $p$ must encode information about the initial step-length and turning angle distributions as well as accounting for auto-correlation inherited in the process from the sub-sampling \citep{nams2013sampling}.

This can be seen by considering sub-sampling at rate $r=2$ in Theorem 2 and comparing this to Theorem 1.  If we set $r=2$ and therefore $k=1$ in Eq.~\ref{eq:theorem 2} from Theorem 2, we should return Eq.~\ref{eq:theorem 1} from Theorem 1.  Doing so gives 
\begin{equation}\nonumber
    f^{\{2\}}(\theta)=\frac{1}{2(p-\frac{1}{2})^3}\left[f_\circ\left(\frac{\theta}{2(p-\frac{1}{2})}\right)\ast f_{\circ}^{\ast 2}\left(\frac{\theta}{(p-\frac{1}{2})}\right)\right].
\end{equation}
Setting $p=1$, gives
\begin{equation} \nonumber
    f^{\{2\}}(\theta)=4\left[f_\circ\left(\theta\right)\ast f_{\circ}^{\ast 2}\left(2\theta\right)\right].
\end{equation}
as required from Eq.~\ref{eq:theorem 1}.

This demonstrates that when the sampling rate is $r=2$ and the initial underlying step-length distribution is constant, as was the case for Theorem 1, we have  $p=1$.  For differing initial step-length distributions the precise value of $p$ may vary slightly due to following steps in the RW process not being of the same length (see Remark 2).  Whilst Appendix C demonstrates that the precise step-length distribution does not greatly influence the given turning angle distribution after sampling, the parameter $p$ would encode any slight effect that does occur.

\begin{figure}
    \centering
    \includegraphics[width=2.5in]{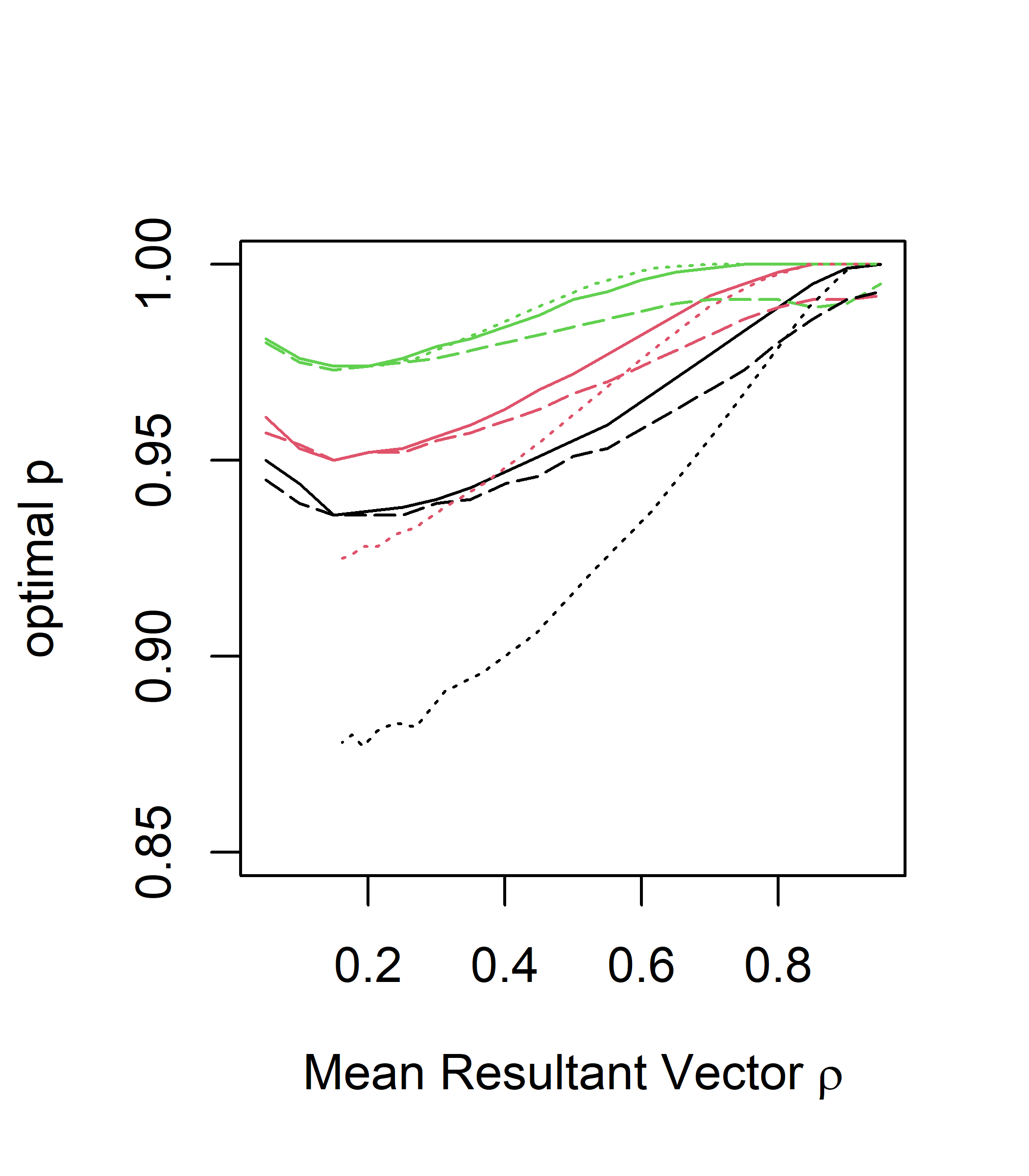}
    \caption{Optimal $p$ values as a function of the length of the mean resultant vector $\rho$.  Colours denote different subsampling levels: $k=2$ - green; $k=3$ - red; $k=4$ - black.  Line type denotes different distributions: solid - wrapped normal; dashed - wrapped Cauchy; dotted - wrapped triangle.}
    \label{fig:opt p}
\end{figure}

As the sub-sampling rate increases to find the optimal value of $p$ for a given, distribution, length of the mean resultant vector length $\rho$ and sampling rate, $r$, we manually best fitted distributions by minimising the MSD ($L^2$ distance) between the distributions for simulated results and those found by a parameter sweep over $p$,  similar to the approach taken in \citet{bailey2021emergence}.  This required comparing the observed turning angle distributions from sub-sampled RWs with those found by letting $p$ vary in Eq. \ref{eq:28}.

\begin{figure}
    \centering
    \includegraphics[width=\linewidth]{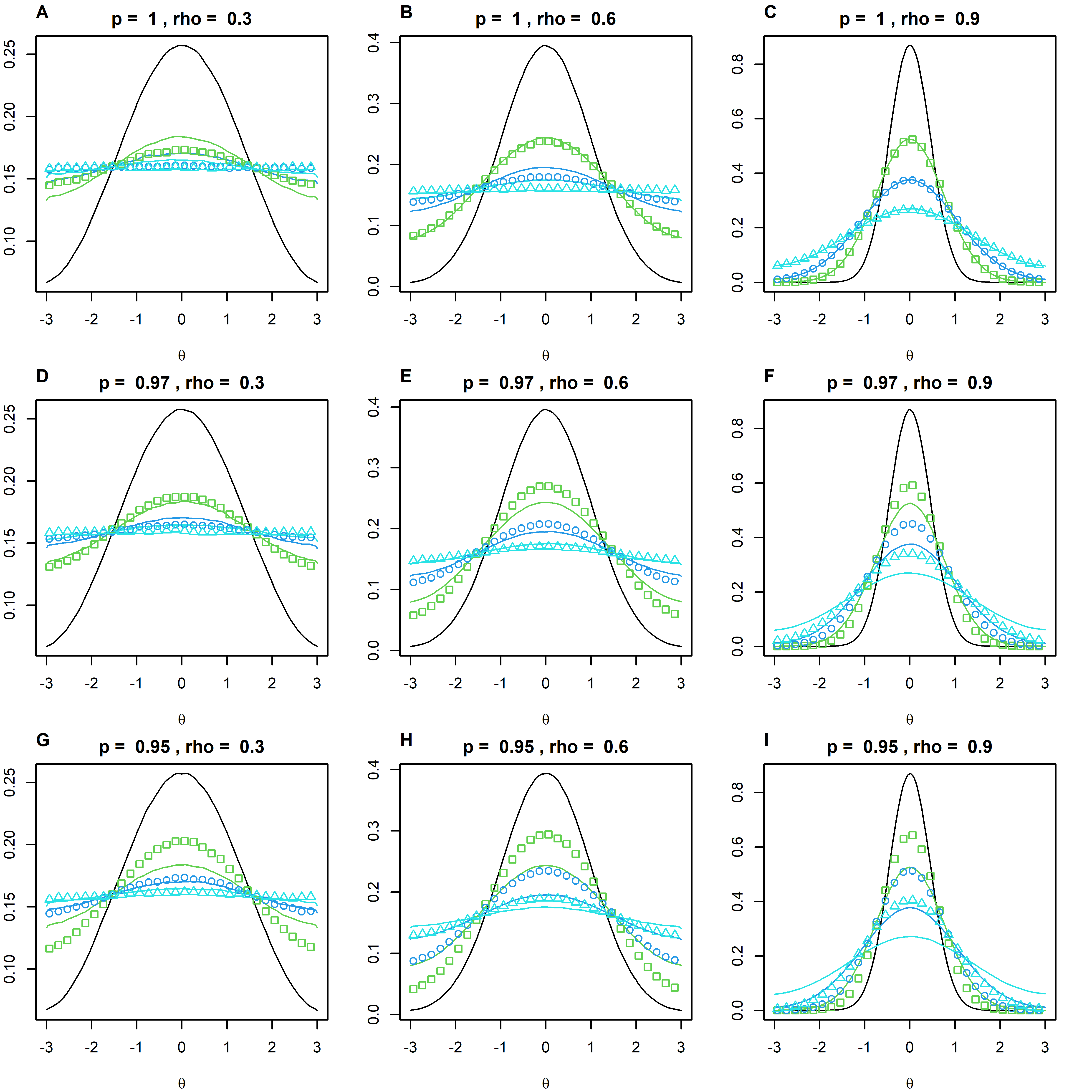}
    \caption{Panel demonstrating the sensitivity of the predicted distributions as given in Theorem 2 to changes in the value of the parameter $p$.  The value of $p$ was  fixed at $p=1$ (top row); $p=0.97$ (middle row); and $p=0.95$ (bottom row).  Results are for the initial CRW featuring turning angles described by a wrapped Normal distribution (black line) and fixed step-length distribution.  Colours correspond to differing levels of sub-sampling: green - $r=2$; cyan - $r=4$; dark blue - $r=8$.  In all cases, observed results are in solid lines, results given by Theorem 2 are points.}
    \label{fig:theorem2_fixedp}
\end{figure}

Results are shown in Fig.~\ref{fig:opt p}, and demonstrate that the value of $p$ is sensitive to the sampling rate (coloured lines) and to the underlying turning angle distribution (type of line).  Although it is noticeable that the wrapped normal and wrapped Cauchy give similar result, compared to the wrapped triangle.  Given that both the wrapped normal and wrapped Cauchy are two of the most utilised and observed distributions in movement data (both coming from the family of symmetric wrapped stable distributions \citep{jammalamadaka2001topics}; see Appendix A), this indicates that it is the sampling rate and concentration parameter that are the most important factors to consider when determining the optimal $p$ value.  Whilst it is also apparent that the value of $p$ mostly lies in $(0.94,1)$, the sensitivity of Theorem 2 to the precise value of $p$ is demonstrated in Fig.~\ref{fig:theorem2_fixedp}, which shows the sensitivity of the distributions to the parameter $p$ by fixing the value of $p$ and comparing the predicted (points) with the observed results (solid lines).  This shows that a relatively small perturbation away from the optimal value for $p$ of order $\pm0.02$ can have large effects on the accuracy of Theorem 2.

\section{Applications}
Due to their ease of use, their interpretability and natural fit to telemetry data, discrete-time continuous space random walks have been shown to be an important tool in the analysis of animal movement and in the field of movement ecology \citep{kareiva1983analyzing,nathan2008movement,codling2008random,bailey2018navigational,ModelingandDataAnalysisinMovementEcology}, driving the development of new theory \citep{codling2008random,smouse2010stochastic,lutscher2021correlated} and deepening our understanding of complex ecological processes \citep{berg1993random,hooten2017animal,lewis2013dispersal}.\\
Here we present two examples of how comparing the expected sub-sampled turning angle distributions to that found from observed data, can be used to qualitatively analyse movement data. 

\subsection{Determine between the presence of a CRW and a BRW.}

First, we consider determining between trajectories which are either formed by a biased random walk (BRW) or CRW.  Recall, a CRW can be defined by the step-length and turning angle, $\theta$, whereas in comparison, a BRW where the bias is assumed to be towards a constant direction (point at infinity), can be described by the step-length and \textit{orientation}, $\phi$, which is the angle taken relative to some absolute direction; usually either a bearing from North, or from the positive $x$-axis \citep{codling2008random,bailey2018navigational}.  In this case we would have $\phi$ being described by a circular distribution, $g_{\circ}$, parameterised by, $\mu$, the direction of the bias in relation to the absolute direction of measure and $\rho_{\text{BRW}}$, the length of the mean resultant vector in the direction of $\mu$.  Similar to a CRW, the MRV, $\rho_\text{BRW}$ determines the `strength' of the bias with values closer to 1 giving more straight-line movement, and values closer to 0 giving more meandering movement. Despite not being characterised by the turning angles, a BRW formulated in this way still maintains a turning angle distribution described by some zero centred circular distribution.  Therefore identifying whether movement is best described by a BRW or a CRW cannot simply be determined by considering the turning angle distribution.

There are various methods to determine whether recorded movement is best described by a BRW or a CRW \citep{benhamou2006detecting,marsh1988form}.  However, these methods are not always reliable nor practical, often resulting in large variances in the measured statistic, the need to compare the measured statistic from observed data with that of large scale simulations, or an over reliance on best fitting distributions to data sets, as well as requiring a trajectory featuring a large number of steps \citet{benhamou2006detecting}.  Here we consider short paths and demonstrate that by simply comparing the observed distribution of turning angles when a movement path is sub-sampled, with the expected distribution as predicted by Theorem 2, gives a simple qualitative analysis of whether a CRW or BRW is the best descriptive model.

\subsubsection{Simulated Data}
Trajectories of length 100 steps were simulated, formed either using a BRW or CRW model.  In each case, step-lengths were fixed at unit length, with the turning angles described by a zero-centred wrapped normal distribution with $\rho_{\text{CRW}}=0.9, 0.7, 0.5$ for the CRW, and orientations, $\phi$ described by a zero-centred wrapped normal distribution with $\rho_{\text{BRW}}=0.95, 0.84, 0.71$.  These gave the BRW a turning angle distribution equivalent to the values chosen for $\rho_{\text{CRW}}$, therefore, each trajectory, whether BRW or CRW, was expected to have the same step length and turning angle distributions.

Fig 10 shows the turning angle distribution of un-sampled trajectories (solid black) for each value of $\rho$, formed by either a BRW or CRW, along with the expected sub-sampled turning angle distribution for a CRW as given by Theorem 1 (black dashed).  In each panel, the resulting distributions of turning angles found when the trajectories are sub-sampled at a rate of $r=2$ are shown in: light red, for those that were BRW; and light blue for those that were CRW.  They demonstrate that if the sub-sampled distribution of turning angles has a sharper peak than the original distribution, the movement is best described by a BRW, whereas a CRW is expected to have a flatter peak, below the un-sampled turning angle distribution.  The clear delineation between the results from the BRW (red) and the CRW (blue) indicate that this method can be used on short movement paths to provide an indication of whether movement is likely from a BRW or CRW.
 
Whilst these results do not give a quantitative result for the expected likelihood of movement being either a BRW or CRW, the results do provide a clear qualitative indication that, crucially, does not rely on large scale repeated simulation or best-fitting distributions to data.

\begin{figure}
    \centering
    \includegraphics[width=\linewidth]{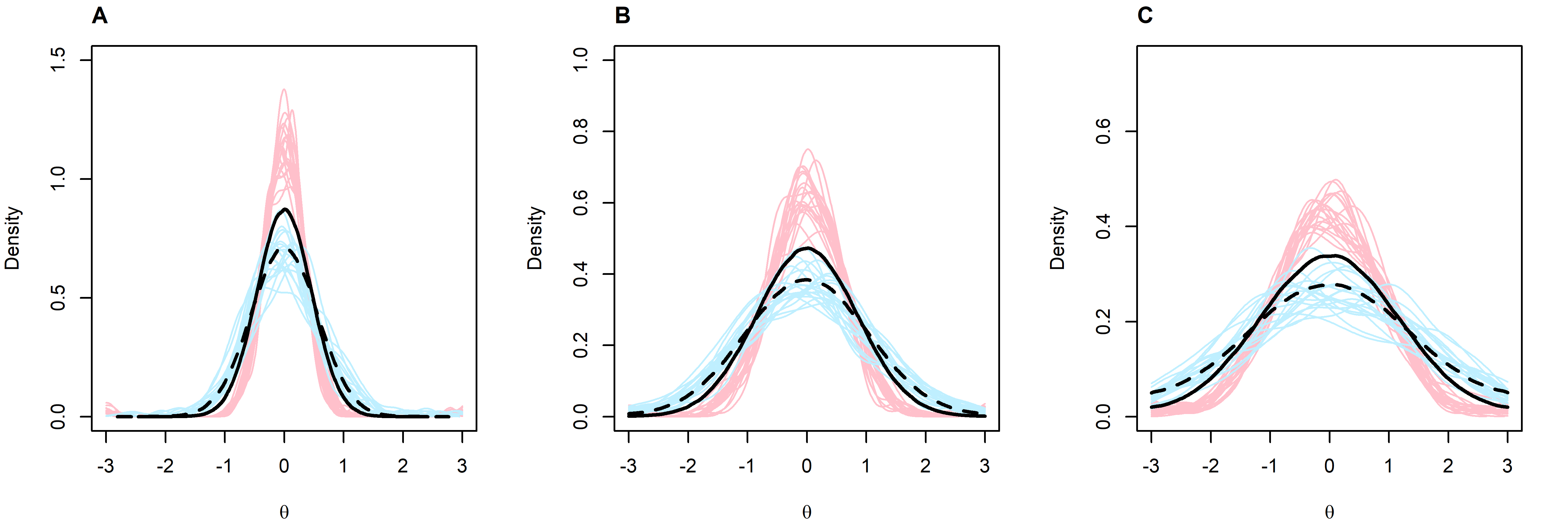}
    \caption{Fig 10 depicts the results of the approach outlined in section 4.1.1 repeated 20 times for both a BRW (pale red) and a CRW (pale blue). }
    \label{fig:app B v CRW}
\end{figure}

\subsubsection{Observed data - \textit{P. cupreus}}
We now demonstrate this approach on observed experimental movement data.  In \citet{bailey2021walking} \textit{P. cupreus} beetles were tracked with their resulting movement classified as either a BRW, CRW or undetermined using the Marsh-Jones statistic \citep{marsh1988form}.  That approach requires multiple repeated simulation to produce a statistic to give a quantitative indication as to which RW model best describes the data, however, the resulting statistic can produce wide confidence intervals, which was compounded  in this instance by the relative small data set, with each trajectory consisting of 300 time steps.  We can compare the results found in \citet{bailey2021walking} using the Marsh-Jones statistic with the method discussed above.  Fig 11 shows the recorded movement paths (A) for four beetles, along with the distribution of the turning angles (B - solid black line).  In the original manuscript, the movement from Beetle 5 and Beetle 9 were classified as a CRW, Beetle 4 a BRW and Beetle 1 was undetermined (see Supplementary Material File 1 in \citet{bailey2021walking}).  If we consider the sub-sampled turning angle distributions (dashed-dotted red line) and compare these with the expected result for a pure CRW (dashed blue line), we can see that Beetle 5 and Beetle 9 seem closely related to the predicted results indicating a CRW is potentially an appropriate model.  Whereas, Beetle 4 has a much sharper peak than the initial distribution and therefore would be best described by a BRW. Beetle 1 has a sub-sampled distribution very similar to the initial distribution and therefore can not be reliable described by either a BRW or CRW.  These results therefore match those found in the initial analysis, but  using a simpler approach.
\begin{figure}
    \centering
    \includegraphics[width=6in]{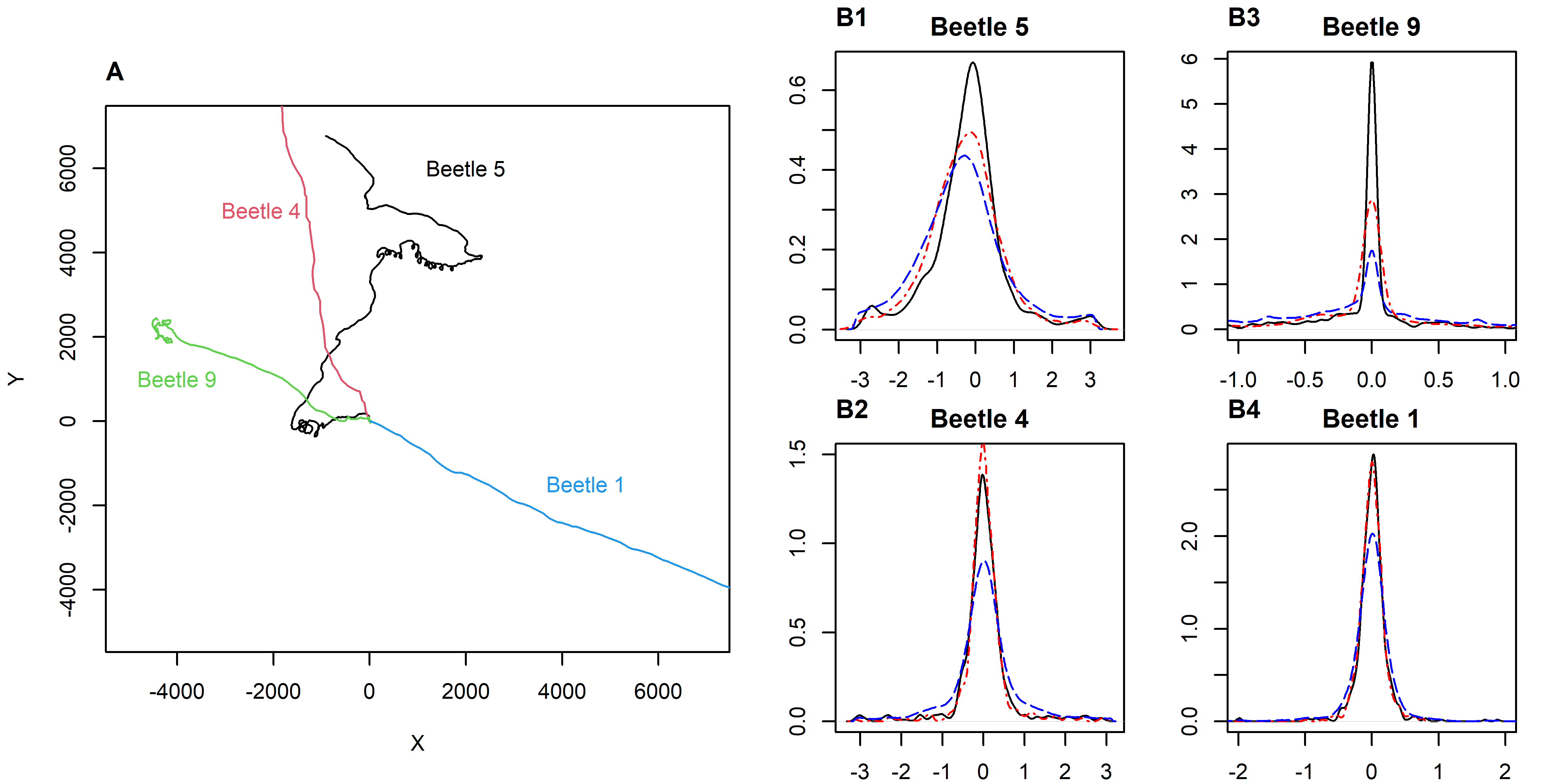}
    \caption{(A) depicts the movement trajectories of the four \textit{P. cupreus} beetles as described in Section 3.1.2 and by \citet{bailey2021walking}.  (B1-4) shows the turning angle distribution of the un-sampled paths (black, solid line), the turning angel distribution found when sub-sampling the trajectories as a rate of $r=2$ (red, dashed-dotted line), against the expected sub-sampled turning angle distribution as calculated using Eq. 9 (blue, dashed line).}
    \label{fig:app beetle}
\end{figure}
\subsection{Identification of multi-state or more complex movement}
Recorded movement often covers multiple behaviours in individuals.  For example, animals can exhibit distinct movement phases across a period of time such as foraging, which can be characterised by slow winding movement, and travelling, where movement is faster and more directed \citep{gurarie2016animal}.  These can be identified using various methods, including Hidden Markov models \citep{michelot2016movehmm}, break point analyses \citep{edelhoff2016path} and mixed distributions \citep{bailey2021emergence}.  However, as discussed in \citet{bailey2021emergence} it is not always clear when such a multi-state model should be considered by examining the data.  Here we show how by considering the sub-sampled turning angle distribution of observed movement data, an indication that a multi-state model would be best suited to the data rather than a simple single-state RW model, can be inferred.  This approach does not give specific information regarding the multi-state model, such as the number of states, the shape of the distributions in each state or the time spent in each state; rather it can be used as a first pass to help indicate whether a more in-depth analysis to determine the best fitting multi-state model should be used.  With the approach requiring a relatively small number of data points to give a clear indication.

\subsubsection{Simulated Data}
Fig \ref{fig:app multi}A shows an example of a movement path with two distinct movement states: state 1, characterised by straighter movement (corresponding to a turning angle distribution with a high mean resultant vector, $\rho$); and state 2, featuring more winding movement (corresponding to a turning angle distribution with a low mean resultant vector).  The corresponding turning angle distribution for the entire movement path is shown in Fig \ref{fig:app multi}B (black lines).  Using this distribution, the expected distribution for turning angles from the sub-sampled path can be found using Eq. 9 (dashed blue line), this can then be compared to the distribution of turning angles from the sub-sampled movement path (red lines).  Fig \ref{fig:app multi}B shows the results from repeatedly simulating a mixed movement path of total length 300 steps, formed by repeated periods of 50 steps in state 1 (straighter movement) followed by 50 steps in state 2 (winding movement).  The lighter lines in Fig \ref{fig:app multi}B show the results from individual simulations, with the thick lines showing the averaged results from the repeated simulations.  These results indicate that when movement is formed from a multi-state CRW the sub-sampled turning angle distribution (red solid) features a peak laying below the original un-sampled turning angle distribution (black solid) but above the expected sub-sampled distribution for a pure CRW (blue dashed).

\subsubsection{Observed data - African Elephant}
As an example with real-world data we consider the movement path of an bull African elephant, \textit{Loxodonta africana}, (Fig \ref{fig:app multi}C) which was demonstrated by  \citet{bailey2021emergence} to be best described by a multi-state CRW (data available from \citet{wall2014elliptical}).  However, as discussed by \citet{bailey2021emergence}, the turning angle distribution of the entire movement path is well described by a single distribution and could therefore be misclassified as a single movement state.    Whereas \citet{bailey2021emergence} demonstrated the better fitting multi-state CRW model by performing a parameter sweep requiring large scale simulations, here we indicate the presence of such multi-sate behaviour by subsampling.  Fig \ref{fig:app multi}D shows the distribution of the turning angles from the observed data (black) along with the distribution of the sub-sampled turning angles (red) and the expected distribution of the sub-sampled turning angles when using Theorem 2 (blue).  The results indicate that the movement is not well described by a pure CRW as the blue and red lines are not closely matched, however as the initial and sub-sampled distributions are closely matched (red and black) this would indicate that a more complex movement model should be explored.

\begin{figure}
    \centering
    \includegraphics[width=4in]{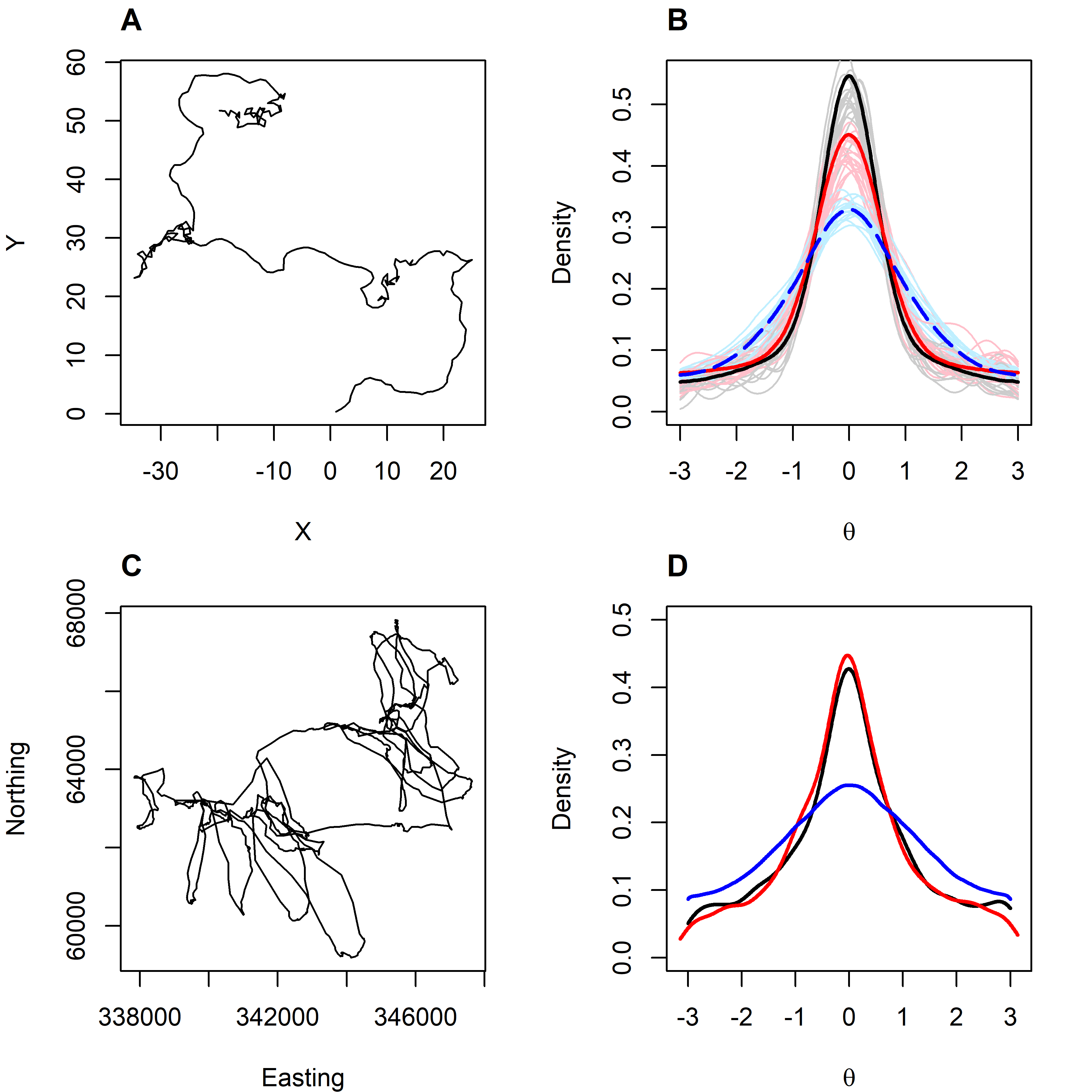}
    \caption{(A) Depicts an example of a multi-state movement path featuring two distinct movement phases.  (B) Plots the turning angle distributions for the multi-state movement depicted in (A); the distribution from the upsampled movement path (black), the distribution from the sub-sampled movement (red) and the expected distribution for the sub-sampled path assuming movement was formed from a single CRW (blue).  The lighter lines correspond to individual simulations, the bold lines correspond the the expected results. (C) Plots the movement path of the African elephant (ID: Haliba) from \citet{wall2014elliptical}.  (D) plots the distribution of the turning angles from (C) from the un-sampled movement (black), the sub-sampled movement (red) sampled at rate $r=2$, and the expected sub-sampled distribution if the initial movement was from a single CRW (blue)}
    \label{fig:app multi}
\end{figure}

\section{Discussion}

Here we have investigated the effect of sub-sampling on the descriptive probability distributions of correlated random walks.  We have given precise formulations for the step-length and turning angle distributions when a CRW is sub-sampled at a rate of every other location ($r=2$) assuming the initial step sizes were all constant.  We have further shown it is possible to determine (up to a very close approximation) the form of the PDF for the turning angles of a sub-sampled CRW for any sub-sampling rate of the form $r=2^k$. We have demonstrated the usefulness of sub-sampling as a form of data analysis by highlighting approaches where it can help in identifying the most appropriate model to describe movement in several scenarios, with our findings having applications in various fields across the sciences, specifically in the field of movement ecology.

There are many ways this work could be advanced such as by obtaining expressions for sampling at any rate $r$.  Our presented method works only for powers of 2, and cannot be extended to allow for all integer sampling rates, therefore some alternative method that does not rely on triangulation may be required.  We also did not consider extending this process to CRWs in 3-dimensions.  This is an active area of research, due to the increasing accuracy and availability of telemetry data recorded in 3-dimensions, this has lead to many recent advances in developing the mathematical properties of 3-dimensional CRW \citep{ahmed2023random,sadjadi2015persistent}.

Similarly, there is more work required to properly include the effect of step-length distribution.  Whilst here the effect of differing initial step-lengths were not considered as they were found to have a negligible effect on the result compared to the sample rate, turning angle distribution and mean-resultant vector of the turning angle distribution (see Remark 2 and Appendix C), it is clear that there is an effect.  More importantly, there will be auto-correlation between step-lengths and turning angles that are not considered here at higher sampling rates, rather, all sub-sampled paths were still assumed to consist of turning angles and step-lengths that are i.i.d as well as neither auto- nor cross-correlated.  This has been explored when considering uniform turning angle distributions in \citet{rosser2013effect}, but including more general turning angle distributions and sampling at any level is yet to be solved.

Another natural extension would be to consider the effects of sampling on other, more complex, RW models, as briefly explored in the Applications (Section 4). When considering animal movement, individuals will often have both a preferred overall direction of movement (bias) and a preference in following their previous direction (persistence), this gives rise to the biased and correlated random walk (BCRW) and is usually characterised by some balancing combination of a BRW and a CRW.  These can be created in a variety of approaches, such as vector weighted \citep{benhamou1992efficiency,bailey2018navigational}, weighted angular \citep{schultz2001edge} or through a switching process \citep{peleg2016optimal}.  Similarly, there are many other approaches used to describe motion in general, from across the sciences, in both discrete and continuous time, such as velocity-jump \citep{monmarche2020velocity,othmer2000diffusion}, Ornstein-Uhlenbeck \citep{sevilla2019generalized,breed2017predicting}, continuous-time correlated random walks \citep{johnson2008continuous}, (truncated) L\'{e}vy walks \citep{humphries2013new,zaburdaev2015levy}, persistent turner walks \citep{gautrais2009analyzing}, along with more statistical approaches such as step-selection functions \citep{thurfjell2014applications,duchesne2015equivalence}, Brownian bridges \citep{horne2007analyzing} and many others.  All these have different formulations which inform and describe the subsequent movement path, though the connections between models are being explored \cite{duchesne2015equivalence}.  Therefore the effect sampling has on these underlying parameters and models has yet to be fully explored outside of the discrete time RW model.  Specifically, the effect of transforming a continuous movement path, to discrete paths is known to have significant effects on the analysis of the movement, therefore understanding more precisely how the frequency of this discretisation and the links between different discretisations would help retain the important information of the movement path \citep{mcclintock2014discrete}. 

We also only considered that the initial CRW was described by one turning angle and one step-length distribution, whereas as discussed in the Applications section, this is usually not the case in movement data, with movement being formed from the combination of multiple movement models, each with their own parametrisation.  
Whilst we showed this method of considering the sub-sampled distributions can give an indication as to whether multiple states of movement are present, as well as whether movement is best described by a CRW or a BRW this is only a qualitative analysis.  Further work could look at using this methodology to provide a metric or quantitative measure as to the likely model between telemetry data, perhaps similar to the Marsh-Jones statistic \citep{marsh1988form}.

Whilst we have extended the understanding of the effects of sub-sampling, our results in Theorems 1 \& 2 lead to the following conjectures about the precise connection between the distributions of turning angles from sub-sampled CRW trajectories
\begin{conjecture}
    There is no circular distribution $f_\circ (\theta;\lambda_1,\ldots,\lambda_n)$, other than the circular uniform distribution, such that $f^{\{2\}}_{\circ}=f_\circ (\theta;\hat{\lambda}_1,\ldots,\hat{\lambda}_n)$, regardless of step-length distribution $\Lambda$.
\end{conjecture}
or more generally
\begin{conjecture}
    There is no circular distribution $f_\circ (\theta;\lambda_1,\ldots,\lambda_n)$, other than the circular uniform distribution, such that $f^{\{r\}}_{\circ}=f_\circ (\theta;\hat{\lambda}_1,\ldots,\hat{\lambda}_n)$, where $r\geq2$ an integer, regardless of step-length distribution $\Lambda$. 
\end{conjecture}
Conjecture 2 is not one that this work can necessarily answer as we only provide an expression for $f_\circ^{\{r\}}$ up to close approximation.  However, Conjecture 1 would follow as a direct consequence of Theorem 1 and it is therefore likely that no subsequent higher sampling rate would work.  

\begin{figure}[h]
    \centering
    \includegraphics[width=2.5in]{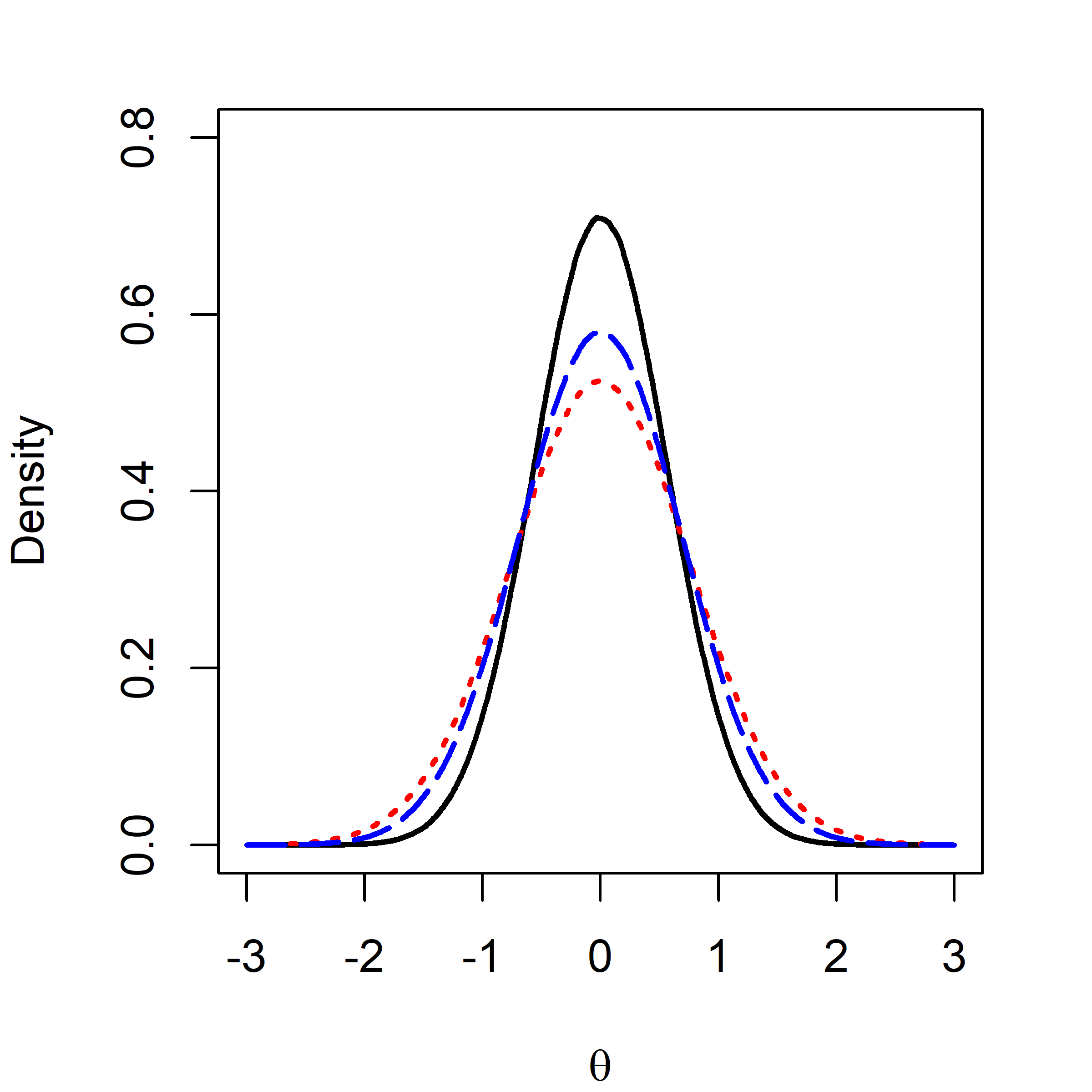}
    \caption{Plot showing the distribution of turning angles when a CRW is sampled at a rate of $r=2$ (black - solid), along with the distribution of turning angles when the same CRW is sampled at rate $r=4$ (red - dotted), which is compared with the distribution found when a CRW with turning angle distribution given by the solid black line is sampled at a rate of $r=2$ (blue - dashed), along with the corresponding step-length distribution given by the CRW corresponding to the solid black distribution.}
    \label{fig:discussion}
\end{figure}
One interesting outcome from our work is that it demonstrates the following.  Let $(X)_i$ be a CRW with turning angle distribution $f_{\circ,X}$ and step-length distribution $\Lambda_X$, then from Theorem 1 we know that the subsequent RW formed from sampling $(X)_i$ at a rate of $r=2$ has turning angle distribution $f^{\{2\}}_{\circ,X}$ and step-length distribution $\Lambda^{\{2\}}_X$.  Now let $(Y)_j$ be a CRW with turning angle distribution $f_{\circ,Y}=f^{\{2\}}_{\circ,X}$ and step-length distribution $\Lambda_Y=\Lambda^{\{2\}}$, then the RW found by sampling  $(Y)_j$ at a rate of $r=2$ will not yield the same distribution as that given by subsampling $(X)_i$ at a rate of $r=4$.  That is $f^{\{2\}}_{\circ,Y}\neq f^{\{4\}}_{\circ,X}$ (see Fig \ref{fig:discussion}).  In theory this could be used to determine if a CRW has previously been sampled, although in practice this will have limited uses as data rarely is known to come from a precise CRW process.  However, this could highlight the importance of auto-correlation in the sampled RW, as demonstrated by \citet{nams2013sampling}, the act of sampling a purely CRW imparts auto-correlation between the step-lengths and turning angles into the resulting sampled RW, therefore this could be what drives this difference.  If so, our method demonstrates the mathematical reasons that underpins this observation giving a quantitative measure of the difference the added auto-correlation causes to the turning angle and step-length distributions.  More work could be done to understand the precise mathematical description between sampling and auto-correlation.

Overall, the CRW remains a useful and powerful tool for modelling a variety of natural processes and despite its ubiquity across the sciences there are still many fundamental mathematical and statistical properties that we do not fully understand, including those pertaining to the effects and usefulness of sub-sampling.  Here we have looked to address some of these properties, however, as discussed our work leads to more questions and points to the need for continued research into CRW and RW theory in general, to help us more fully understand, model and predict these natural processes and phenomena.

\section{Statements and Declarations}
No funding was received to assist with the preparation of this manuscript.  The authors have no relevant financial or non-financial interests to disclose.

\section{Ethics declarations}
\subsection{Conflict of interest/Competing interests} The authors declare that there is no conflict of interest in this paper.
\subsection{Consent to participate} Not applicable.
\subsection{Ethics approval} Not applicable.

\section{Data Availability}
The authors declare that the data supporting the findings of this study are available within the paper, and references therein.

\newpage

\begin{appendices}

\section{Circular and Wrapped Distributions}\label{secA1}
Here we give some brief background notes on wrapped and circular distributions.  More information about circular statistics and data can be found in \citet{jammalamadaka2001topics,fisher1995statistical,pewsey2013circular,mardia2009directional} with a helpful short review provided by \citet{lee2010circular}.

\subsection{General Information}
Circular distributions are probability distributions which have domain $(-\pi,\pi]$ and are therefore frequently used in the description of angular data.  A distribution, $f(X)$, can be readily transformed into a circular distribution, $f_\circ(\theta)$ by wrapping it around the unit circle, that is reducing the domain modulo $2\pi$.  Giving
\begin{equation}
    \theta\equiv X\;\mod{2\pi}
\end{equation}
and allowing the subsequent circular distribution to be considered as
\begin{equation}
    f_\circ(\theta)=\sum_{m=-\infty}^{\infty}f(\theta+2\pi m),\quad \theta\in(-\pi,\pi].
    \label{eq: wrap def}
\end{equation}
Distributions which are found by following this wrapping procedure are known as `wrapped distributions' with commonly used examples including the wrapped Cauchy, wrapped normal \citep{bailey2021emergence}, but many others are also used in circular data analysis including wrapped exponential, wrapped beta and wrapped generalised Gompertz distribution.  Distributions that do not have an equivalent `unwrapped' form are referred to as circular distributions and include the von Mises circular distribution, which is known to closely mimic the wrapped normal distribution \citep{stephens1963random,collett1981discriminating}, and is one of the most commonly used to describe angular data.
\subsection{Moments and mean resultant vector (concentration parameter), $\rho$}

An important notion in circular statistics are the $n$th trigonometric moments, $\phi_n$, (which are analogous  standard \textit{moments} in probability theory).  These are given by
\begin{equation}
    \phi_n=\E[e^{in\theta}]=\int_{-\pi}^{\pi}e^{in\theta}f_\circ(\theta)\:d\theta, \;n\in\mathbb{Z}^+
\end{equation}
alternatively, they can be considered in terms of cosine and sine moments,
\begin{align}\nonumber
    \phi_n&=\E[e^{in\theta}]\\\nonumber
    &=\E[\cos{n\theta}+i\sin{n\theta}]\\\nonumber
    &=\int_{-\pi}^{\pi}\cos{(n\theta)}f_\circ(\theta)\;d\theta+i\int_{-\pi}^{\pi}\sin{(n\theta)}f_\circ(\theta)\;d\theta\\\nonumber
    &=:\alpha_n+i\beta_n.
\end{align}
The first trigonometric moment, $\phi_1$, which determines the mean of the distribution, can be written as
\begin{equation}
    \phi_1=\E[e^{i\theta}]=\rho e^{i\mu}
\end{equation}
where $\rho$ is the length of the mean resultant vector (MRV) and $\mu$ the mean direction of the distribution.

In circular statistics, the measure of how `peaked' a distribution is, is given by the length of the mean resultant vector, often also called the \textit{concentration parameter}, which from above we can see is given by
\begin{equation}
    \rho=\sqrt{\alpha_1^2+\beta_1^2}
\end{equation}
where $\alpha_1$ is the mean cosine and $\beta_1$ the mean sine, of the distribution.

\subsection{Example Wrapped and Circular distributions}
Here we define a few of the most commonly used wrapped and circular distributions.\\

\noindent\underline{Circular uniform distribution}

\noindent The circular equivalent to the uniform distribution is given by

\begin{equation}
    f_{\text{U}}(\theta)=\frac{1}{2\pi},\quad \theta\in(-\pi,\pi].
\end{equation}
This equates to all angles being equally likely to be chosen and is the turning angle distribution which corresponds to directionless movement as seen in a simple random walk, Brownian motion and L\`{e}vy walks \citep{codling2008random,ahmed2023random}.\\

\noindent\underline{Symmetric wrapped stable distribution }

\noindent A symmetric wrapped stable (SWS) distribution is a family of wrapped distributions given by the density function:
\begin{equation}
f_{\text{sws}}(\theta;\rho,\mu,a)=\frac{1}{2\pi}\left(1+2\sum_{n=1}^{\infty}\rho^{n^a}\cos{n(\theta-\mu)}\right),\;\;n\in\mathbb{N},\quad \theta\in(-\pi,\pi]
\end{equation}
where $\rho\in[0,1)$ is the MRV/concentration parameter, $\mu\in[-\pi,\pi)$ is the mean location parameter around which the distribution is symmetric and $a\in(0,2]$ \citep{jammalamadaka2001topics,bailey2021emergence}.  SWS distributions are unimodal distributions, which are symmetric around the mean value of $\mu$.\\

\noindent\underline{Wrapped Normal distribution}

\noindent The wrapped normal (WN) distribution is found by taking the standard normal distribution on the reals, and wrapping it around the unit circle as in Eq. \ref{eq: wrap def}.  This can be represented as
\begin{equation}
f_{\text{WN}}(\theta;\rho,\mu)=\frac{1}{2\pi}\left(1+2\sum_{n=1}^{\infty}\rho^{n^2}\cos{n(\theta-\mu)}\right),\;\;n\in\mathbb{N},\quad \theta\in(-\pi,\pi]
\end{equation}
where $\rho\in[0,1)$ and $\mu\in[-\pi,\pi)$ are as described for the SWS distribution.  Note, this is precisely the from of a SWS with $a=2$, which demonstrates that the wrapped normal is a SWS distribution.\\

\noindent\underline{Wrapped Cauchy distribution}

\noindent The wrapped Cauchy (WC) distribution is found by taking the standard Cauchy distribution on the reals, and wrapping it around the unit circle as in Eq. \ref{eq: wrap def}.  Which can be represented as
\begin{equation}
f_{\text{WC}}(\theta;\rho,\mu)=\frac{1}{2\pi}\left(1+2\sum_{n=1}^{\infty}\rho^{n}\cos{n(\theta-\mu)}\right),\;\;n\in\mathbb{N},\quad \theta\in(-\pi,\pi]
\end{equation}
where $\rho\in[0,1)$ and $\mu\in[-\pi,\pi)$ are as described for the SWS distribution.  Note, this is precisely the from of a SWS with $a=1$, which demonstrates that like the WN, the wrapped Cauchy is also an SWS distribution.  However, unlike the WN, the WC can be written in closed form as
\begin{equation}
    f_{\text{WC}}(\theta;\rho,\mu)=\frac{1}{2\pi}\frac{1-\rho^2}{1+\rho^2-2\rho\cos{(\theta-\mu)}},\quad \theta\in(-\pi,\pi].
\end{equation}
In general, compared with the WN, the WC features a more sharply peaked distribution around the mean value $\mu$, which decays quicker away from the mean but keeping `fatter-tails' at the extreme values \citep{bailey2021emergence}.\\

\noindent\underline{von Mises distribution}

\noindent The von Mises (vM) distribution is a circular distribution which does not have an `un-wrapped' equivalent.  It is given by the density function,
\begin{equation}
f_{\text{vM}}=\frac{1}{2\pi I_0(\kappa)}e^{\kappa\cos(\theta-\mu)},\quad \theta\in(-\pi,\pi]
\end{equation}
where $\mu\in[-\pi,\pi)$, $\kappa>0$ and $I_0(\kappa)$ is the modified Bessel function of the first kind with order 0.  Here, $\kappa$ acts similarly to $\rho$ in the SWS distributions, however it is unbounded above.  The distribution is  symmetric around the value of $\mu$ and, whilst is not a member of the SWS family of distributions, it closely mimics the properties of the WN \citep{codling2010d,collett1981discriminating,stephens1963random}.  Due to this, and as it can be written in closed form, it is often used in place of the WN.

\subsection{Triangle Distribution}
Here we give details regarding the form of the triangle distribution used in the manuscript; note other forms of a circular triangle distribution exist e.g. \cite{jammalamadaka2001topics,circular}.\\

The triangle distribution can be considered as a circular distribution in various ways, for example it could be calculated as a wrapped distribution similar to the wrapped Cauchy and wrapped normal where the wrapped triangle distribution is found by taking the triangle distribution on the reals and taking the angles modulo $2\pi$.  Here we take the usual triangle distribution on the reals and  remove any angles with absolute value greater than $\pi$.  The resulting density is then re-scaled to give a PDF as shown in Fig.\ref{fig:triangle dist} and formulated below.\\

Let $f(x)$ be the triangle distribution on the reals that is symmetric around the point $c\in \mathbb{R}$, given by
\begin{equation}\nonumber
    f(x;a,c)=\frac{(a-c)-\lvert c-x\rvert}{(a-c)^2}
\end{equation}
where $a\in \mathbb{R}_{>0}$.   When the distribution is zero centred we have $c=0$, giving
\begin{equation}\nonumber
    f(x;a)=\frac{a-\lvert x\rvert}{a^2}
\end{equation}
Essentially, this gives a distribution with domain  $[-a, a]$.   If we consider this distribution with domain $[-\pi,\pi]$ , to give a `circular' triangle distribution, which we denote by $f_\circ$, then in the case where $a\leq\pi$ we have
\begin{equation}\nonumber
    f_\circ(x;a)=\begin{cases}f(x;a), \quad \text{for }x\in [-a,a]\\
    0,\qquad \text{otherwise.}
    \end{cases}
\end{equation}
 For any $a>\pi$ we simply restrict $x$ to be in $[-\pi,\pi]$ and re-scale.  That is, if $a>\pi$  we have
\begin{equation}\nonumber
f_\circ(x;a)=\frac{f(x;a)}{\left(\int_\pi^a f(\tilde{x})d\tilde{x}+\int_
{-a}^{-\pi} f(\tilde{x})d\tilde{x}\right)} = \frac{f(x;a)}{F(\lvert x\rvert>a)}=\frac{f(x;a)}{2F(x>a)}
\end{equation} 
for all $x\in[-\pi,\pi]$.  This still holds for the case when $a<\pi$, hence the circular form of the triangle distribution is therefore given by
\begin{equation}\nonumber
    f_\circ(x;a)=\begin{cases}\frac{f(x;a)}{2F(x>a)}, \quad \text{for }x\in [-a,a]\\
    0,\qquad \text{otherwise.}
    \end{cases}
\end{equation} where $f(x;a)=\frac{a-\lvert x\rvert}{a^2}$.\\

The mean resultant vector for these distributions for various values of $a$ are given in Table \ref{table: tri} following the procedure outlined in Chapter 1 of \citet{jammalamadaka2001topics}.

\begin{table}[]
\centering
\begin{tabular}{|c|c|}
\hline
\textbf{$a$}        & \textbf{$|\text{MRV}|=\rho$} \\ \hline
0.3141593           & 0.9918025    \\ \hline
0.6283185           & 0.967536     \\ \hline
0.9424778  & 0.9280932    \\ \hline
1.2566371  & 0.8750400    \\ \hline
1.5707963           & 0.8105887    \\ \hline
1.8849556           & 0.7370167    \\ \hline
2.1991149           & 0.6567193    \\ \hline
2.5132741  & 0.5730016    \\ \hline
2.8274334  & 0.4881268    \\ \hline
3.1415927 & 0.4054206    \\ \hline
3.4557519 & 0.3376897    \\ \hline
3.7699112 & 0.2896437    \\ \hline
4.0840704 & 0.2531472    \\ \hline
4.3982297 & 0.2252001    \\ \hline
4.7123890 & 0.2028957    \\ \hline
5.0265482 & 0.1841670    \\ \hline
5.3407075 & 0.1691151    \\ \hline
5.6548668 & 0.1560478    \\ \hline
 5.9690260 & 0.1445165    \\ \hline
6.2831853 & 0.1344017    \\ \hline
\end{tabular}
\caption{Table showing the value for $a$ used in the circular triangle distribution and the corresponding mean resultant vector length ($|\text{MRV}|=\rho$)}
\label{table: tri}
\end{table}

\begin{figure}[H]
    \centering
    \includegraphics[width=3.5in]{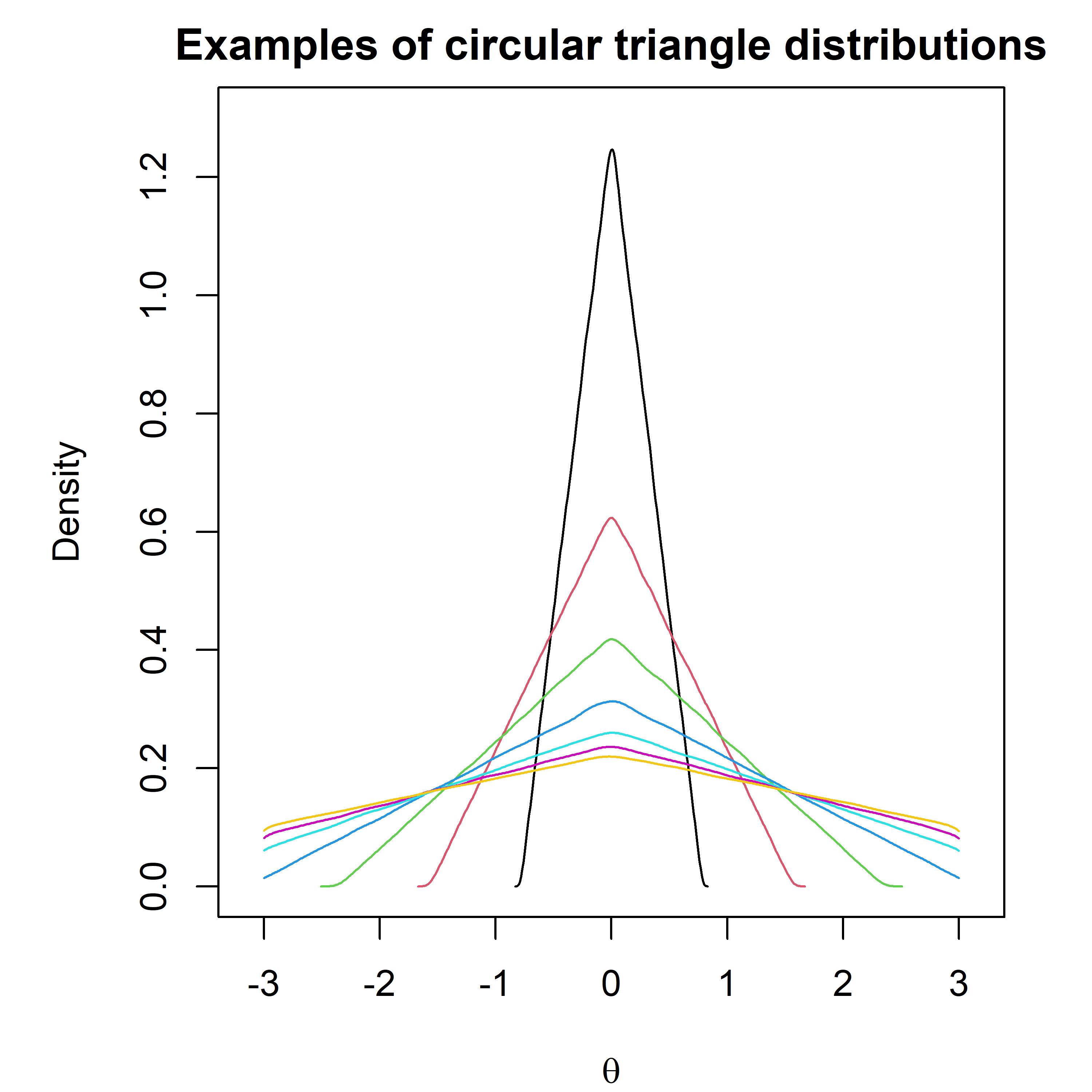}
    \caption{Figure showing examples of the triangle distribution used throughout the text.  Lines correspond to different values of $a$ which is discussed in the text in Appendix D.  Here we have: $a=\frac{\pi}{4}$ - black; $a=\frac{2\pi}{4}$ - red; $a=\frac{3\pi}{4}$ - green; $a=\pi$ - blue; $a=\frac{5\pi}{4}$ - cyan; $a=\frac{6\pi}{4}$ - pink; $a=\frac{7\pi}{4}$ - gold.}
    \label{fig:triangle dist}
\end{figure}

\newpage

\section{Example of Theorem 2, with sample rate of $r=4 \Leftrightarrow k=2$}
As a demonstration we consider the example for sampling a CRW at rate $r=2^2=4$ equivalent to $k=2$.  We first follow Eq. 22 to find the coefficients $a^{\{2^2\}}_{t-\tau}$ and then the approximated coefficients $\tilde{a}^{\{2^2\}}_{t-\tau}$ using Eq 23.\\

From Eqs. 18 \& 19, with $k=2$, we have that at point $X_t$ the turning angle of the sub-sampled random walk is
\begin{equation}
    \theta^{\{2^2\}}_{X_t}=(-1)^{\mathbbm{1}_{1-p_2}}\theta^{\{2^1\}}_{X_t}+(-1)^{\mathbbm{1}_{1-p_2}}\alpha^{\{2^2\}}_{X^-_t}+(-1)^{\mathbbm{1}_{1-p_2}}\alpha^{\{2^2\}}_{X^+_t}
\end{equation}
with
\begin{equation}
    \alpha^{\{2^2\}}_{X^-_t}=\frac{1}{2}\theta^{\{2^1\}}_{X_{t-2^1}},\quad \alpha^{\{2^2\}}_{X^+_t}=\frac{1}{2}\theta^{\{2^1\}}_{X_{t+2^1}}.
\end{equation}
Hence
\begin{equation}
    \theta^{\{2^2\}}_{X_t}=(-1)^{\mathbbm{1}_{1-p_2}}\theta^{\{2^1\}}_{X_t}+(-1)^{\mathbbm{1}_{1-p_2}}\frac{1}{2}\theta^{\{2^1\}}_{X_{t-2}}+(-1)^{\mathbbm{1}_{1-p_2}}\frac{1}{2}\theta^{\{2^1\}}_{X_{t+2}}.
\end{equation}
Now considering $\theta^{\{2^1\}}_{X_t}$ we have
\begin{equation}
    \theta^{\{2^1\}}_{X_{t}}=(-1)^{\mathbbm{1}_{1-p_1}}\theta^{\{2^0\}}_{X_{t}}+(-1)^{\mathbbm{1}_{1-p_1}}\alpha^{\{2^1\}}_{X^-_{t}}+(-1)^{\mathbbm{1}_{1-p_1}}\alpha^{\{2^1\}}_{X^+_{t}}
\end{equation}
with
\begin{align}\nonumber
    \alpha^{\{2^1\}}_{X^-_{t}}&=\frac{1}{2}\theta^{\{2^0\}}_{X_{t-2^0}}\\\nonumber
    &=\frac{1}{2}\theta^{\{1\}}_{X_{t-1}},\\\nonumber
    \text{and}\quad \alpha^{\{2^1\}}_{X^+_{t}}&=\frac{1}{2}\theta^{\{2^0\}}_{X_{t+2^0}}\\\nonumber
    &=\frac{1}{2}\theta^{\{1\}}_{X_{t+1}}.
\end{align}
Using the above and the notation convention outlined in Section 3 that $\theta^{\{1\}}_i\stackrel{\text{def}}{=}\theta_i$, gives
\begin{equation}
    \theta^{\{2^1\}}_{X_{t}}=(-1)^{\mathbbm{1}_{1-p_1}}\theta_{X_{t}}+(-1)^{\mathbbm{1}_{1-p_1}}\frac{1}{2}\theta_{X_{t-1}}+(-1)^{\mathbbm{1}_{1-p_1}}\frac{1}{2}\theta_{X_{t+1}}.
\end{equation}
Repeating for the $\frac{1}{2}\theta^{\{2^1\}}_{X_{t-2}}$ term in Eq B.3 gives
\begin{align}\nonumber
    \theta^{\{2^1\}}_{X_{t-2}}&=(-1)^{\mathbbm{1}_{1-p_1}}\theta^{\{2^0\}}_{X_{t-2}}+(-1)^{\mathbbm{1}_{1-p_1}}\alpha^{\{2^1\}}_{X^-_{t-2}}+(-1)^{\mathbbm{1}_{1-p_1}}\alpha^{\{2^1\}}_{X^+_{t-2}}\\\nonumber
    \text{where}\quad\alpha^{\{2^1\}}_{X^-_{t-2}}&=\frac{1}{2}\theta^{\{2^0\}}_{X_{t-2-2^0}}\\\nonumber
    \Rightarrow&=\frac{1}{2}\theta^{\{2^0\}}_{X_{t-3}}\stackrel{\text{def}}{=}\frac{1}{2}\theta_{X_{t-3}},\\\nonumber
    \text{and} \quad\alpha^{\{2^1\}}_{X^+_{t-2}}&=\frac{1}{2}\theta^{\{2^0\}}_{X_{t-2+2^0}}\\\nonumber
    \Rightarrow&=\frac{1}{2}\theta^{\{2^0\}}_{X_{t-1}}\stackrel{\text{def}}{=}\frac{1}{2}\theta_{X_{t-1}}.
\end{align}
Hence
\begin{equation}
    \theta^{\{2^1\}}_{X_{t-2}}=(-1)^{\mathbbm{1}_{1-p_1}}\theta_{X_{t-2}}+(-1)^{\mathbbm{1}_{1-p_1}}\frac{1}{2}\theta_{X_{t-1}}+(-1)^{\mathbbm{1}_{1-p_1}}\frac{1}{2}\theta_{X_{t-3}}.
\end{equation}
Similarly, we have for the $\frac{1}{2}\theta^{\{2^1\}}_{X_{t+2}}$ term in Eq. B.3 that
\begin{equation}
    \theta^{\{2^1\}}_{X_{t+2}}=(-1)^{\mathbbm{1}_{1-p_1}}\theta_{X_{t+2}}+(-1)^{\mathbbm{1}_{1-p_1}}\frac{1}{2}\theta_{X_{t+1}}+(-1)^{\mathbbm{1}_{1-p_1}}\frac{1}{2}\theta_{X_{t+3}}.\\
\end{equation}
Finally, substituting B.6 and B.7, into B.5 gives
\begin{align}\nonumber
\theta^{\{2^2\}}_{X_t}=&(-1)^{\mathbbm{1}_{1-p_1}+\mathbbm{1}_{1-p_2}}\theta_{X_t}+(-1)^\mathbbm{{1}_{1-p_2}}\frac{1}{2}\left[(-1)^\mathbbm{{1}_{1-p_1}}\theta_{X_{t-2}}+(-1)^\mathbbm{{1}_{1-p_1}}\frac{1}{2}\theta_{X_{t-1}}+(-1)^\mathbbm{{1}_{1-p_1}}\frac{1}{2}\theta_{X_{t-3}}\right]\\
&+(-1)^\mathbbm{{1}_{1-p_2}}\frac{1}{2}\left[(-1)^\mathbbm{{1}_{1-p_1}}\theta_{X_{t+2}}+(-1)^\mathbbm{{1}_{1-p_1}}\frac{1}{2}\theta_{X_{t+1}}+(-1)^\mathbbm{{1}_{1-p_1}}\frac{1}{2}\theta_{X_{t+3}}\right]
\end{align}

\begin{figure}[h]
    \centering
    \includegraphics[width=0.85\linewidth]{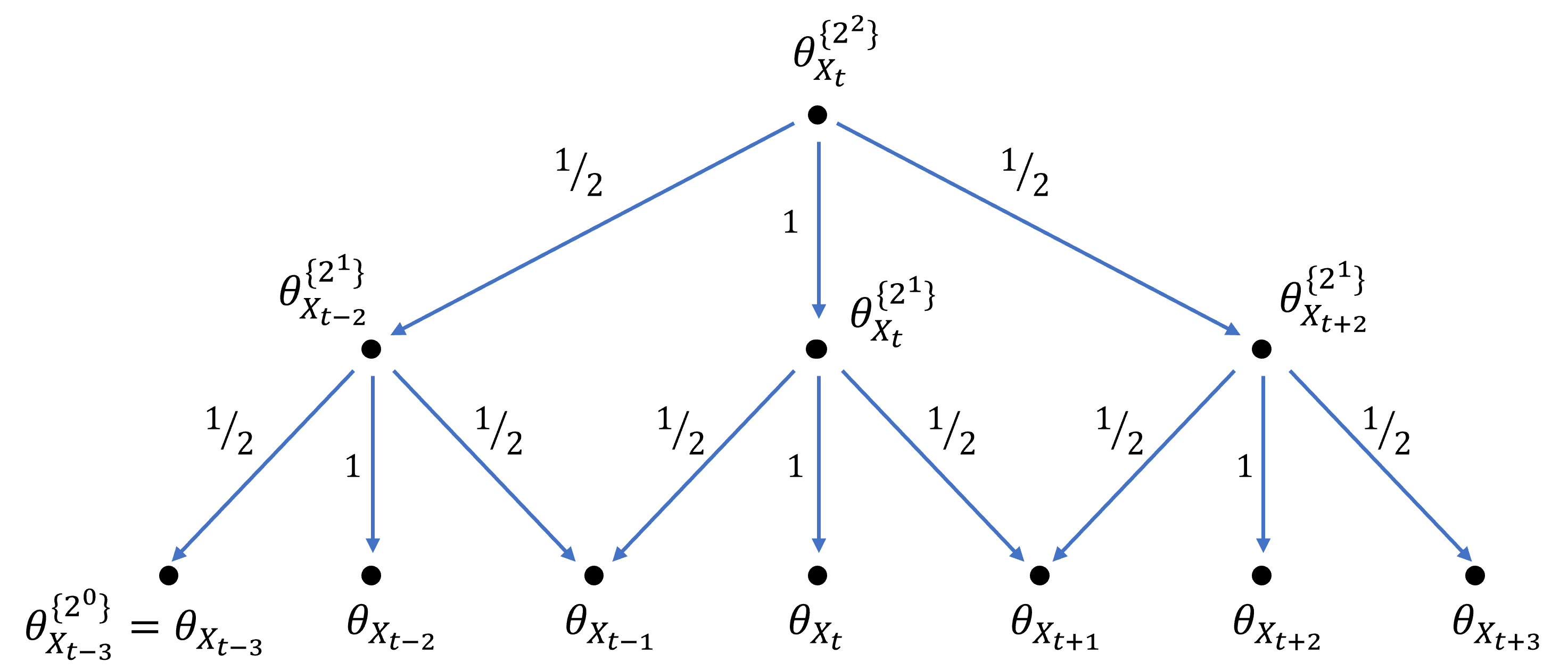}
    \caption{Graphical representation of the calculation for $\theta^{\{2^2\}}_{X_t}$, similar to Figure 6 from the Main text.  The diagram  demonstrates two iterations of Eq. 8 and shows that $\theta^{\{2^2\}}_{X_t}$ can be written as the linear sum of terms $a^{\{2^2\}}_{t-\tau}\theta_{X_{t-\tau}}$ with $\tau=\{-3,-2,\ldots,3\}$, where the coefficients $a^{\{2^2\}}_{t-\tau}$ are determined by considering every path from the top node of $\theta^{\{2^2\}}_{X_t}$ to the leaf node $\theta_{X_{t-\tau}}$ (Eq. A.8).}
    \label{fig:app graph samp 4}
\end{figure}

This demonstrates that to write $\theta^{\{2^2\}}_{X_t}$ as a summation would require considering every possible permutation of the $(-1)^{\mathbbm{1}_{1-p}}$ terms yielding positive or negative terms for both $p_1$ and $p_2$ (see Fig. \ref{fig:app graph samp 4} for graphical representation).

Instead, if we follow the rules in Eq. 23 to give the coefficients $\tilde{a}^{\{2^2\}}_{t-\tau}$ we get the below expression
\begin{equation}
 \theta^{\{2^2\}}_{X_t}=(2p-1)^2\left(\frac{1}{4}\theta_{X_{t-3}}+\frac{2}{4}\theta_{X_{t-2}}+\frac{3}{4}\theta_{X_{t-1}}+\theta_{X_t}+\frac{3}{4}\theta_{X_{t+1}}+\frac{2}{4}\theta_{X_{t+2}}+\frac{1}{4}\theta_{X_{t+3}}\right)
\end{equation}

This allows the distribution of $\theta^{\{2^2\}}_{X_t}$, denoted by $f^{\{4\}}$, to be calculated by convolution, giving;
\begin{align*}
    f^{\{4\}}&(\theta)=\\
    &\frac{4}{(2p-1)^2}f_\circ\left(\frac{4\theta}{(2p-1)^2}\right)\ast\frac{4}{2(2p-1)^2}f_\circ\left(\frac{4\theta}{2(2p-1)^2}\right)\ast \frac{4}{3(2p-1)^2}f_\circ\left(\frac{4\theta}{3(2p-1)^2}\right)\ast \frac{1}{(2p-1)^2}f_\circ\left(\frac{\theta}{(2p-1)^2}\right) \\
    &\ast \frac{4}{3(2p-1)^2}f_\circ\left(\frac{4\theta}{3(2p-1)^2}\right)\ast\frac{4}{2(2p-1)^2}f_\circ\left(\frac{4\theta}{2(2p-1)^2}\right)\ast \frac{4}{(2p-1)^2}f_\circ\left(\frac{4\theta}{(2p-1)^2}\right)\\
    =&\frac{4^6}{(3\cdot2\cdot1)^2}\frac{1}{\left((2p-1)^2\right)^7} f_\circ\left(\frac{\theta}{(2p-1)^2}\right)\ast f_\circ^{\ast2}\left(\frac{4\theta}{(2p-1)^2}\right)\ast f_\circ^{\ast2}\left(\frac{4\theta}{2(2p-1)^2}\right)\ast f_\circ^{\ast2}\left(\frac{4\theta}{3(2p-1)^2}\right)\\
    =&\frac{(p-\frac{1}{2})^{-14}}{(3\cdot2\cdot1)^2} f_\circ\left(\frac{\theta}{(2p-1)^2}\right)\ast f_\circ^{\ast2}\left(\frac{4\theta}{(2p-1)^2}\right)\ast f_\circ^{\ast2}\left(\frac{4\theta}{2(2p-1)^2}\right)\ast f_\circ^{\ast2}\left(\frac{4\theta}{3(2p-1)^2}\right),
\end{align*}
which we note is equivalent to the result given by Theorem 2 with $k=2$.

Finally, transforming this into a distribution on the unit circle gives the final distribution,

\begin{equation}
   f^{\{4\}}_\circ(\theta)=\sum_{k=-\infty}^\infty f^{\{4\}}\left(\theta \pm 2k\pi\right). 
\end{equation}

\newpage

\section{Step-length distribution doesn't affect TA distribution}

Here we show that the choice of distribution for the step lengths ($\Lambda$) will not affect the resulting distributions of the turning angles from sub-sampled random walks, under the assumption that the distribution describing the step lengths is unimodal or uniform.\\

Figures \ref{fig: step length WN} and \ref{fig: step length WC} below demonstrate that various differing initial step-length distributions, including those with large variances, give very similar results in the turning angle distributions of the sub-sampled random walk.  This indicates that the initial step-length does not have a significant effect on the expected turning angle distribution.  Note, all this work concerns distributions with finite variances and so L\`{e}vy walk and power-law behaviour is not considered.

\begin{table}[h]
\centering
\begin{tabular}{|c|c|c|c|}
\hline
\textbf{Distribution}             & \textbf{Mean} & \textbf{Variance} & \textit{line colour}$^\dagger$\\ \hline
Fixed ($\ell=1$) & 1             & 0              & \textit{red}  \\ \hline
Uniform (min=0, max=1000)        & 500.5         & 83,167        &  \textit{black}   \\ \hline
Exp($\lambda$ = 0.5)              & 2             & 4       &   \textit{cyan}        \\ \hline
Exp($\lambda$ = 1)                & 1             & 1        & \textit{green}         \\ \hline
Exp($\lambda$ = 2)                & 0.5           & 0.25     &     \textit{magenta}     \\ \hline
Log-normal($\mu=0, \sigma=1$)     & 1.649         & 4.671     & \textit{blue}        \\ \hline
Log-normal($\mu=0, \sigma=0.5$)   & 1.133         & 0.365      & \textit{gold}       \\ \hline
Log-normal($\mu=0.5, \sigma=1$)   & 2.718         & 12.696     & \textit{silver}       \\ \hline
Log-normal($\mu=1, \sigma=1$)     & 4.482         & 34.513     & \textit{dark grey}       \\ \hline
Log-normal($\mu=1, \sigma=0.5$)   & 3.080         & 2.695     & \textit{rose}        \\ \hline
Log-normal($\mu=2, \sigma=0.5$)   & 8.373         & 19.912     & \textit{light-green}       \\ \hline
\end{tabular}
\caption{Table detailing the step-length distributions used here. Including their respective means and variance. $\dagger$The colours listed here correspond to the colour of the curves used in Fig \ref{fig: step length WN} and \ref{fig: step length WC} below}
\label{Table: step-lengths}
\end{table}

\begin{figure}[H]
    \centerline{\includegraphics{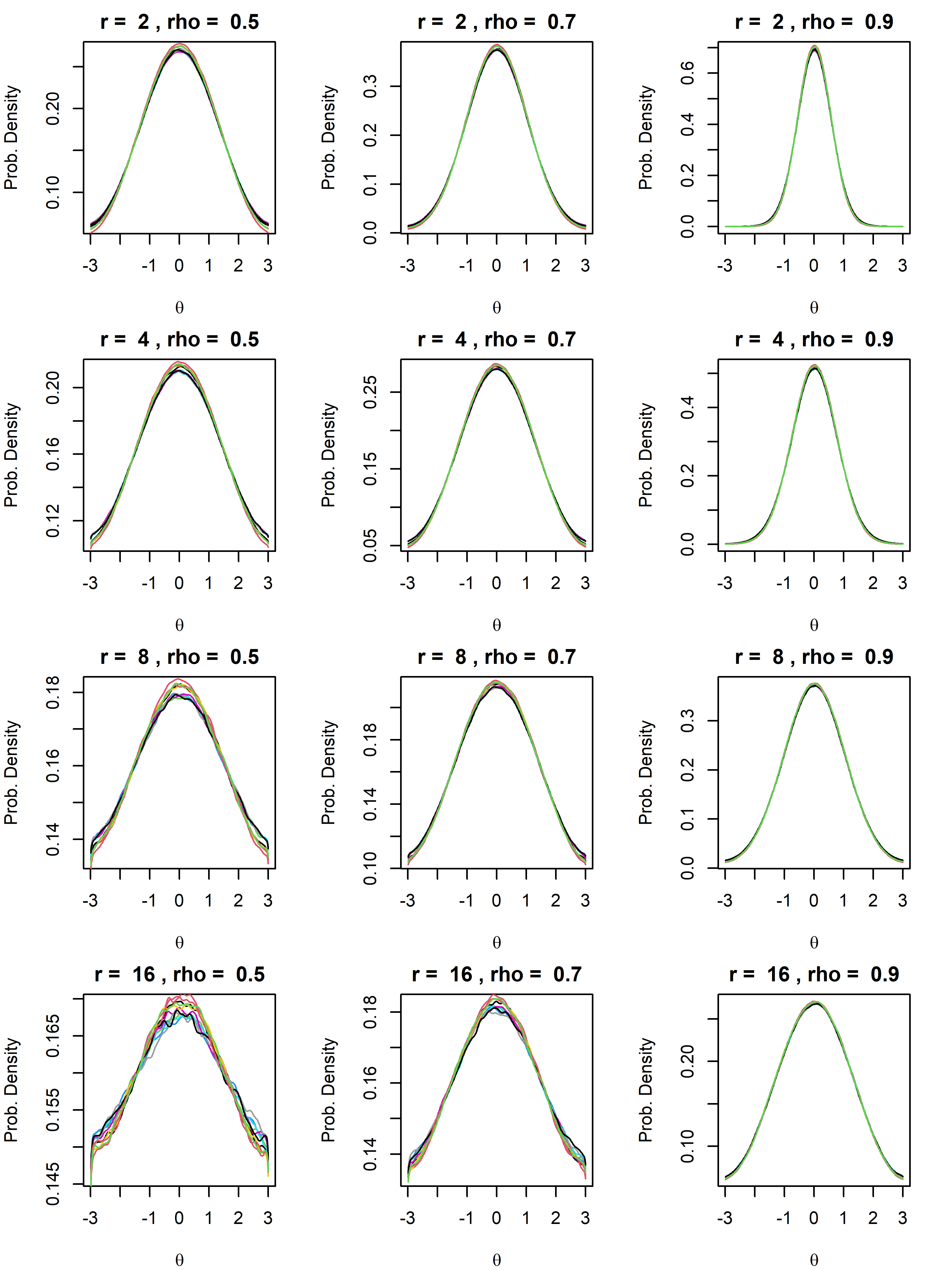}}
    \caption{Figure showing the resulting distribution of turning angles when a RW is sampled at rate $r=2, 4, 8, 16$ (rows), with initial turning angle distribution given by a zero-centred wrapped normal distribution with mean resultant vector, $\rho$, taking values of $\rho=0.5,0.7,0.9$ (columns). In all cases lines correspond to different initial step length distributions as detailed in Table \ref{Table: step-lengths}}
    \label{fig: step length WN}
\end{figure}
\begin{figure}[H]
    \centerline{\includegraphics{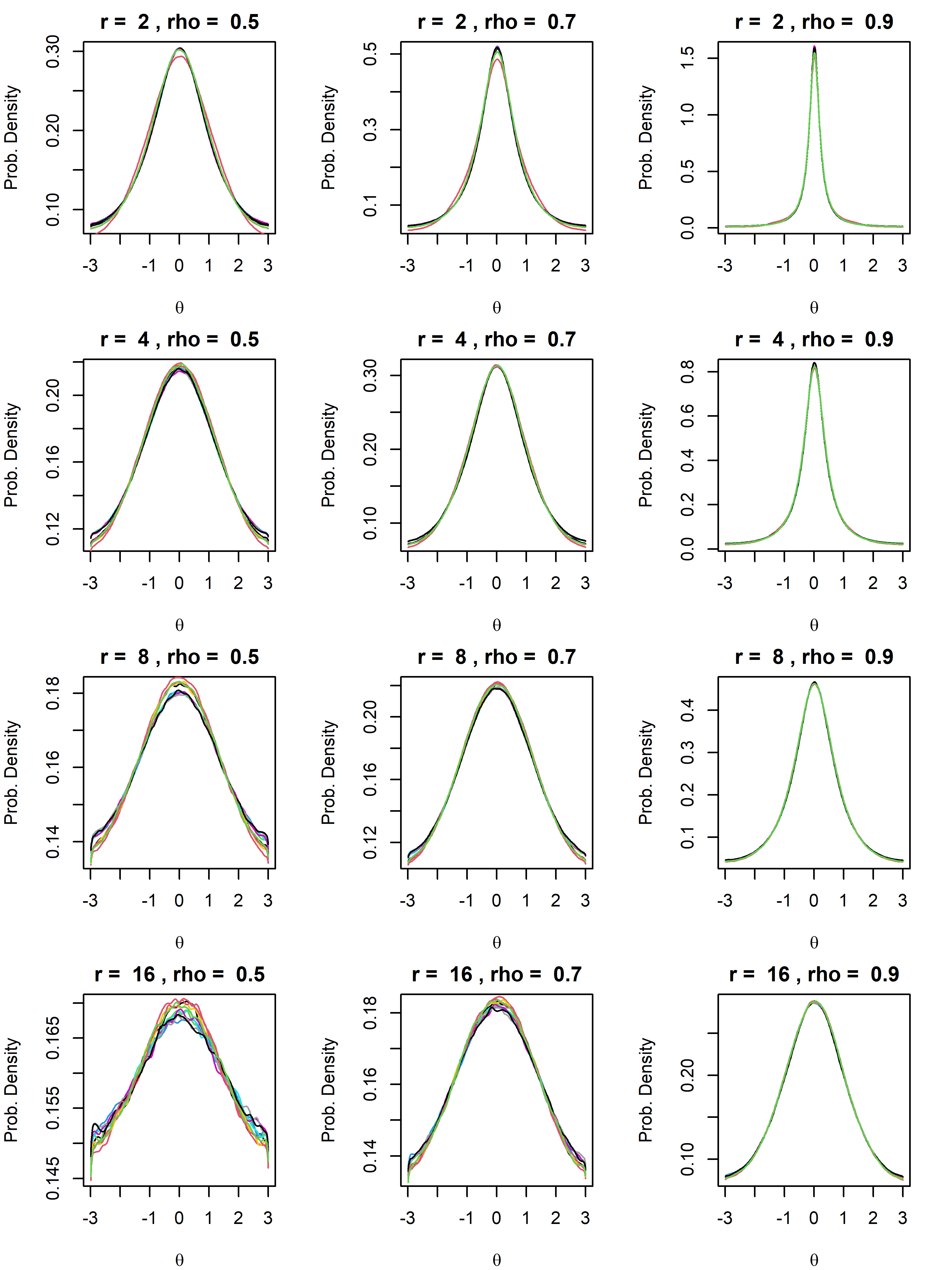}}
    \caption{Figure showing the resulting distribution of turning angles when a RW is sampled at rate $r=2, 4, 8, 16$ (rows), with initial turning angle distribution given by a zero-centred wrapped Cauchy distribution with mean resultant vector, $\rho$, taking values of $\rho=0.5,0.7,0.9$ (columns).  In all cases lines correspond to different initial step length distributions as detailed in \ref{Table: step-lengths}}
    \label{fig: step length WC}
\end{figure}

\end{appendices}

\bibliography{bibliography}

\end{document}